\newcommand{\tr}{{\rm tr}}
\newcommand{\bas}{{\bf as}}
\newcommand{\bdev}{{\bf dev}}
\renewcommand{\div}{{\rm div}}
\newcommand{\curl}{{\rm curl}}
\newcommand{\R}{\mathds{R}}
\newcommand{\bvarphi}{\mbox{\boldmath$\varphi$}}
\newcommand{\balpha}{\mbox{\boldmath$\alpha$}}
\newcommand{\bepsilon}{\mbox{\boldmath$\varepsilon$}}
\newcommand{\btau}{\mbox{\boldmath$\tau$}}
\newcommand{\bzeta}{\mbox{\boldmath$\zeta$}}
\newcommand{\bgamma}{\mbox{\boldmath$\gamma$}}
\newcommand{\brho}{\mbox{\boldmath$\rho$}}
\newcommand{\bsigma}{\mbox{\boldmath$\sigma$}}
\newcommand{\bSigma}{\mbox{\boldmath$\Sigma$}}
\newcommand{\btheta}{\mbox{\boldmath$\theta$}}
\newcommand{\bnabla}{\mbox{\boldmath$\nabla$}}
\newcommand{\bzero}{{\bf 0}}
\newcommand{\bff}{{\bf f}}
\newcommand{\bv}{{\bf v}}
\newcommand{\bz}{{\bf z}}
\newcommand{\bu}{{\bf u}}
\newcommand{\bg}{{\bf g}}
\newcommand{\bI}{{\bf I}}
\newcommand{\bJ}{{\bf J}}
\newcommand{\bM}{{\bf M}}
\newcommand{\bR}{{\bf R}}
\newcommand{\bV}{{\bf V}}
\newcommand{\bX}{{\bf X}}
\newcommand{\bZ}{{\bf Z}}
\newcommand{\bn}{{\bf n}}
\newcommand{\cA}{{\cal A}}
\newcommand{\cP}{{\cal P}}
\newcommand{\cS}{{\cal S}}
\newcommand{\cT}{{\cal T}}
\newcommand{\cV}{{\cal V}}
\begin{document}

\title{Weakly symmetric stress equilibration and a posteriori error estimation for linear elasticity}

\author{Fleurianne Bertrand%
        \thanks{Institut f\"ur Mathematik, Humboldt-Universit\"at zu Berlin, Unter den Linden 6, 10099 Berlin, Germany
        (\email{fleurianne.bertrand@uni-due.de}).}
        \and Bernhard Kober
        \thanks{Fakult\"at f\"ur Mathematik, Universit\"at Duisburg-Essen, Thea-Leymann-Str. 9, 45127 Essen, Germany
        (\email{bernhard.kober@uni-due.de}, \email{marcel.moldenhauer@uni-due.de}, \email{gerhard.starke@uni-due.de}).
        The authors gratefully acknowledge support by the Deutsche Forschungsgemeinschaft in the Priority Program SPP 1748
        `Reliable simulation techniques in solid mechanics. Development of nonstandard discretization methods, mechanical and
        mathematical analysis' under the project numbers BE 6511/1-1 and STA 402/12-2.}
        \and Marcel Moldenhauer\footnotemark[2]
        \and Gerhard Starke\footnotemark[2]
        }
        
\date{}
\maketitle

\begin{abstract}
  A stress equilibration procedure for linear elasticity is proposed and analyzed in this paper with emphasis on the behavior for
  (nearly) incompressible materials. Based on the displacement-pressure approximation computed with a stable finite element pair,
  it constructs an $H (\div)$-conforming, weakly symmetric stress reconstruction.
  Our focus is on the Taylor-Hood combination of continuous finite element spaces of polynomial degrees $k+1$ and $k$ for
  the displacement and the pressure, respectively.
  Our construction leads to a reconstructed stress tensor by Raviart-Thomas elements of degree $k$ which are weakly symmetric
  in the sense that its anti-symmetric part is zero tested against continuous piecewise polynomial functions of degree $k$.
  The computation is performed locally on a set of vertex patches covering the computational domain in the spirit of equilibration.
  This  weak symmetry allows us to prove that the resulting error estimator constitutes a guaranteed upper bound for the error
  with a constant that depends only on local constants associated with the patches and thus on the shape regularity of the triangulation.
  It does not involve global constants like those from Korn's in equality which may become very large depending on the location and type
  of the boundary conditions. Local efficiency, also uniformly in the incompressible limit, is deduced from the upper bound by the
  residual error estimator. Numerical results for the popular Cook's membrane test problem confirm the theoretical predictions.
\end{abstract}

\begin{keywords}
  a posteriori error estimation, incompressible linear elasticity, Taylor-Hood elements, weakly symmetric stress equilibration,
  Raviart-Thomas elements
\end{keywords}

\begin{AMS} 65N30, 65N50 \end{AMS}

\headers{Weakly symmetric stress equilibration}
              {F. Bertrand, B. Kober, M. Moldenhauer, and G. Starke}

\section{Introduction}

\label{sec-introduction}

This paper is concerned with a stress equilibration procedure for the displacement-pressure formulation of linear elasticity.
Our emphasis is on the behavior for (nearly) incompressible materials and we concentrate ourselves
on the Taylor-Hood combination of continuous finite element spaces of polynomial degrees $k+1$ and $k$ ($k \geq 1$) for the
displacement and the pressure, respectively. This finite element pair has the advantage that it is conforming for the displacement
approximation which simplifies the derivation of an a posteriori error estimator based on the equilibrated stress. Another property
which will prove to be useful in this context is the fact that the stress, computed directy from the displacement-pressure
approximation, already possesses the convergence order $k$ with respect to the $L^2$-norm.

In contrast to the case of Poisson's equation, where equilibrated fluxes are used, the linear elasticity system involves the
symmetric part of the displacement gradient for the definition of the associated stress. This requires the control of the
anti-symmetric part of the equilibrated stress for the use in an associated a posteriori error estimator.
One could perform the stress reconstruction in one of the available symmetric $H (\div)$-conforming stress spaces
like those introduced by Arnold and Winther \cite{ArnWin:02} or other ones included in the comparison \cite{CarEigGed:11}
(see \cite{NicWitWoh:08} or \cite{AinRan:10} for such approaches). 
But this complicates the stress reconstruction procedure significantly compared to the Raviart-Thomas elements (of degree $k$)
used here. This is particularly true in three dimensions where the lowest-order member of the symmetric $H (\div)$-conforming
finite element space constructed in \cite{ArnAwaWin:08} already involves polynomials of degree 4 and possesses 162 degrees
of freedom per tetrahedron. Equilibrated stress
reconstructions with weak symmetry are also considered in \cite{Kim:11a}, \cite{AinRan:11}, \cite{RieDiPErn:17}. These
approaches utilize special stress finite element spaces and are therefore less general than the one presented in this work.

The construction of equilibrated fluxes in broken Raviart-Thomas spaces is described in detail in \cite{BraPilSch:09} and
\cite{BraSch:08}.  More generally, a posteriori error estimation based on stress reconstruction has a long history with ideas 
dating back at least as far as \cite{LadLeg:83} and
\cite{PraSyn:47}. Recently, a unified framework for a posteriori error estimation based on stress reconstruction for the Stokes
system was carried out in \cite{HanSteVoh:12} (see also \cite{ErnVoh:15} for polynomial-degree robust estimates). These
two references include the treatment of nonconforming methods and both of them contain a historical perspective with a long list
of relevant references. Our weakly symmetric stress equilibration procedure is generalized to nonlinear elasticity associated with
a hyperelastic material model in \cite{BerMolSta:19}.



The outline of this paper is as follows. The next section starts by reviewing the displacement-pressure formulation for linear
elasticity and its approximation using the Taylor-Hood finite element pair. It then derives the conditions for a weakly symmetric
stress equilibration. The localization of the stress equilibration procedure is presented in Section \ref{sec-local_equilibration}.
Section \ref{sec-solvability} is concerned with the well-posedness of the local problems arising in the stress equilibration
procedure. In Section \ref{sec-estimates_asym_volum}, local upper estimates for the anti-symmetric
and volumetric stress components are provided which are crucial for the control of the constants associated with the reliability of
the a posteriori error estimates. Based on this, our a posteriori error estimator is derived first for the incompressible limit
case in Section \ref{sec-error_estimation_incompressible}. The effect of the data approximation is studied in detail in
Section \ref{sec-data_approximation}. Section \ref{sec-error_estimation_general} is then concerned with the
a posteriori error estimator for the general case. In Section \ref{sec-local_efficiency}, an upper bound by an appropriate residual
error estimator is established which leads to a local efficiency result for our weakly symmetric stress equilibration error estimator.
Finally, Section \ref{sec-numerical} shows numerical results for the popular Cook's membrane test problem which confirm the
theoretical predictions.

\section{Displacement-pressure formulation for incompressible linear elasticity and weakly symmetric stress reconstruction}

\label{sec-linear_elasticity_stress}

On a bounded domain $\Omega \subset \R^d$, $d = 2$ or $3$, assumed to be polygonally bounded such that the union of
elements in the triangulation $\cT_h$ coincides with $\Omega$, the boundary is split into $\Gamma_D$ (of positive surface
measure) and $\Gamma_N = \partial \Omega \backslash \Gamma_D$. We also assume that the families of triangulations
$\{ \cT_h \}$ are shape-regular and denote the diameter of an element $T \in \cT_h$ by $h_T$. The boundary value problem of
(possibly) incompressible linear elasticity consists in the saddle-point problem of finding $\bu \in H_{\Gamma_D}^1 (\Omega)^d$
and $p \in L^2 (\Omega)$ such that
\begin{equation}
  \begin{split}
    2 \mu ( \bepsilon (\bu) , \bepsilon (\bv) )_{L^2 (\Omega)} + ( p , \div \: \bv )_{L^2 (\Omega)} 
    & = ( \bff , \bv )_{L^2 (\Omega)} + \langle \bg , \bv \rangle_{L^2 (\Gamma_N)} \: ,\\
    ( \div \: \bu , q )_{L^2 (\Omega)} - \frac{1}{\lambda} ( p , q )_{L^2 (\Omega)} & = 0
  \end{split}
  \label{eq:disp_pressure}
\end{equation}
holds for all $\bv \in H_{\Gamma_D}^1 (\Omega)^d$ and $q \in L^2 (\Omega)$. Here, $\bff \in L^2 (\Omega)^d$ and
$\bg \in L^2 (\Gamma_N)^d$ are prescribed volume and surface traction forces, respectively. For the Lam\'e parameters,
$\mu$ is assumed to be on the order of one while $\lambda$ may become arbitrarily large modelling nearly incompressible
material behavior.
From now on, we will abbreviate the inner product in $L^2 (\omega)$ for some subset $\omega \subseteq \Omega$ by
$( \: \cdot \: , \: \cdot \: )_\omega$ (and simply write $( \: \cdot \: , \: \cdot \: )$ in the case of the entire domain $\omega = \Omega$).
For the $L^2 (\Gamma)$ inner product on a part of the boundary $\gamma \subseteq \partial \Omega$ we use the short-hand
notation $\langle \: \cdot \: , \: \cdot \: \rangle_{\gamma}$.
With respect to a suitable pair of finite element spaces $\bV_h \times Q_h$ representing
$H_{\Gamma_D}^1 (\Omega)^d \times L^2 (\Omega)$, the resulting finite-dimensional saddle-point problem consists in finding
$\bu_h \in \bV_h$ and $p_h \in Q_h$ such that
\begin{equation}
  \begin{split}
    2 \mu ( \bepsilon (\bu_h) , \bepsilon (\bv_h) ) + ( p_h , \div \: \bv_h ) &
    = ( \bff , \bv_h ) + \langle \bg , \bv_h \rangle_{\Gamma_N} \: , \\
    ( \div \: \bu_h , q_h ) - \frac{1}{\lambda} ( p_h , q_h ) & = 0
  \end{split}
  \label{eq:disp_pressure_discrete}
\end{equation}
holds for all $\bv_h \in \bV_h$ and $q_h \in Q_h$.
One possibility for the choice of the finite element spaces is, for $k \geq 1$, the Taylor-Hood pair consisting of
continuous piecewise polynomials of degree $k+1$ for each component of $\bV_h$
combined with continuous piecewise polynomials of
degree $k$ for $Q_h$. Our focus in this work is on that finite element combination but much of the derivation is also valid
for more general approaches.

The approximation
\begin{equation}
  \bsigma_h (\bu_h,p_h) = 2 \mu \bepsilon (\bu_h) + p_h \bI
  \label{eq:direct_stress_processing}
\end{equation}
which is obtained from the solution
$( \bu_h , p_h ) \in \bV_h \times Q_h$ of the discrete saddle point problem (\ref{eq:disp_pressure_discrete}) is,
in general, discontinuous and piecewise polynomial of degree $k$.
From $\bsigma_h (\bu_h,p_h)$, we reconstruct an $H (\div)$-conforming stress tensor $\bsigma_h^R$ in the
Raviart-Thomas space (componentwise) $\bSigma_h^R$ of order $k$, usually denoted by $RT_k^d$ (see, e.g.,
\cite[Sect. 2.3.1]{BofBreFor:13}). For the detailed definition of our stress reconstruction algorithm, we will also need the
broken Raviart-Thomas space
\begin{equation}
  \bSigma_h^{\Delta} = \{ \btau_h \in L^2(\Omega) : \left. \btau_h \right|_T \in RT_k (T)^d \} \: .
\end{equation}
By $\cS_h$ we denote the set of all sides (edges in 2D and faces in 3D) of the triangulation $\cT_h$.
For each $\bsigma_h^\Delta \in \bSigma_h^\Delta$ and each interior side $S \in \cS_h$, we define the jump
\begin{equation}
  \llbracket \bsigma_h^\Delta \cdot \bn \rrbracket_S
  = \left. \bsigma_h^\Delta \cdot \bn \right|_{T_-} - \left. \bsigma_h^\Delta \cdot \bn \right|_{T_+} \: ,
  \label{eq:definition_jump}
\end{equation}
where $\bn$ is the normal direction associated with $S$ (depending on its orientation) and $T_+$ and $T_-$ are the elements
adjacent to $S$ (such that $\bn$ points into $T_+$).
For sides $S \subset \Gamma_N$ located on the Neumann boundary, the jump in (\ref{eq:definition_jump}) is to be
interpreted as
\[
  \llbracket \bsigma_h^\Delta \cdot \bn \rrbracket_S = \left. \bsigma_h^\Delta \cdot \bn \right|_{T_-} \: ,
\]
assuming that $\bn$ points outside of $\Omega$. Moreover, a second type of jump is needed which we define as
\begin{equation}
  \llbracket \bsigma_h^\Delta \cdot \bn \rrbracket_S^\ast = \left\{ \begin{array}{lcr}
    \left. \bsigma_h^\Delta \cdot \bn \right|_{T_-} - \bg & , \mbox{ if } & S \subset \Gamma_N \: , \\
    \llbracket \bsigma_h^\Delta \cdot \bn \rrbracket_S & , \mbox{ if } & S \nsubseteq \Gamma_N \: .
  \end{array} \right.
  \label{eq:definition_jump_N}
\end{equation}
The introduction of the auxiliary type of jump in (\ref{eq:definition_jump_N}) allows us later to use the same formulas also for
patches adjacent to the Neumann boundary $\Gamma_N$.

We further define $\bZ_h$ as the space of
discontinuous $d$-dimensional vector functions which are piecewise polynomial of degree $k$. Similarly, $\bX_h$ stands for the
continuous $d (d-1)/2$-dimensional vector functions which are piecewise polynomial of degree $k$. For every
$d (d-1)/2$-dimensional vector $\btheta$ we define $\bJ^d (\btheta)$ by 
\begin{equation}
  \bJ^2(\theta) := \begin{pmatrix} 0 & \theta \\ -\theta & 0 \end{pmatrix}, 
  \quad \quad \bJ^3(\btheta) :=
  \begin{pmatrix} 0 & \theta_3 & -\theta_2 \\ -\theta_3 & 0 & \theta_1 \\ \theta_2 & -\theta_1 & 0 \end{pmatrix}
\label{eq:skew_symmetric_tensor}
\end{equation}
(cf. \cite[Sect. 9.3]{BofBreFor:13}). Finally, the broken inner product 
\begin{equation}
 ( \: \cdot \: , \: \cdot \: )_h := \sum_{T \in \cT_h} ( \: \cdot \: , \: \cdot \: )_T \: ,
 \label{eq:broken_inner_product}
\end{equation}
will be used, where $( \: \cdot \: , \: \cdot \: )_T$ is the $L^2(T)$ inner product. 

We follow the general idea of equilibration (cf. \cite[Sect. III.9]{Bra:07}, \cite{BraSch:08}) and extend it to the
case of weakly symmetric stresses. The construction is done for the difference
$\bsigma_h^\Delta := \bsigma_h^R - \bsigma_h (\bu_h , p_h)$ 
between the reconstructed and the original stress, which is an element of $\bSigma_h^{\Delta}$.
In order to correspond
to an admissible stress reconstruction $\bsigma_h^R$, the following conditions need to be satisfied for $\bsigma_h^\Delta$:
\begin{equation}
  \begin{split}
    ( \div \: \bsigma_h^\Delta , \bz_h )_{h}
    & = - ( \bff + \div \: \bsigma_h (\bu_h,p_h) , \bz_h )_{h} \mbox{ for all } \bz_h \in \bZ_h \: , \\
    \langle \llbracket \bsigma_h^\Delta \cdot \bn \rrbracket_S , \bzeta \rangle_S
    & = - \langle \llbracket \bsigma_h (\bu_h,p_h) \cdot \bn \rrbracket_S^\ast , \bzeta \rangle_S \mbox{ for all }
    \bzeta \in P_k (S)^d \: , \: S \in \cS_h^\ast \: , \\
    ( \bsigma_h^\Delta , \bJ^d (\bgamma_h) ) & = 0 \mbox{ for all } \bgamma_h \in \bX_h \: 
  \end{split}
  \label{eq:equilibration_conditions}
\end{equation}
where $\cS_{h}^\ast := \{ S \in \cS_h: S \nsubseteq \Gamma_D \}$.
Due to our specific choice of $\bZ_h$, the first equation in (\ref{eq:equilibration_conditions}) implies that, on
each $T \in \cT_h$, $\div \: \bsigma_h^\Delta = - \cP_h^k \bff - \div \: \bsigma_h (\bu_h,p_h)$ holds, where $\cP_h^k$ denotes the
element-wise $L^2$ projection onto the space of polynomials of degree $k$. Moreover, on sides located
on the Neumann boundary $\Gamma_N$, (\ref{eq:definition_jump}) and (\ref{eq:definition_jump_N}) lead to
$\bsigma_h^\Delta \cdot \bn = \cP_{h,\Gamma}^k \bg - \bsigma_h (\bu_h,p_h) \cdot \bn$, where $\cP_{h,\Gamma}^k$ denotes the
side-wise $L^2$ projection onto the polynomials of degree $k$.

\section{Local stress equilibration procedure}

\label{sec-local_equilibration}

For the purpose of localizing the reconstruction and deriving local efficiency bounds we make use of a partition of unity. The
commonly used partition of unity with respect to the set $\cV_h$ of all vertices of $\cT_h$,
\begin{equation}
  1 \equiv \sum_{z \in \cV_h} \phi_z \mbox{ on } \Omega \: ,
  \label{eq:partition_of_unity_1}
\end{equation}
consists of continuous piecewise linear functions $\phi_z$. In this case, the support of $\phi_z$ is restricted to
\begin{equation}
  \omega_z := \bigcup \{ T \in \cT_h : z \mbox{ is a vertex of } T \} \: .
  \label{eq:vertex_patch}
\end{equation}
For reasons which will be explained further below in this section, the classical partition of unity has to be modified in order to
exclude patches formed by vertices $z \in \Gamma_N$. To this end, let $\cV_h^\ast = \{ z \in \cV_h : z \notin \Gamma_N \}$
denote the subset of vertices which are not located on a side (edge/face) of $\Gamma_N$. The modified partition of unity is
defined by
\begin{equation}
  1 \equiv \sum_{z \in \cV_h^\ast} \phi_z^\ast \mbox{ on } \Omega \: .
  \label{eq:partition_of_unity}
\end{equation}
For $z \in \cV_h^\ast$ not connected by an edge to $\Gamma_N$ the function $\phi_z^\ast$ is equal to $\phi_z$. Otherwise,
the function $\phi_z^\ast$ has to be modified in order to account for unity at the connected vertices on $\Gamma_N$. For each
$z_N \in \Gamma_N$ one vertex $z_I \notin \Gamma_N$ connected by an edge with $z_N$ is chosen and $\phi_{z_I}$ is
extended by the value $1$ along the edge from $z_I$ to $z_N$ to obtain the modified function $\phi_{z_I}^\ast$. The support of
$\phi_z^\ast$ is denoted by
\begin{equation}
  \omega_z^\ast := \bigcup \{ T \in \cT_h : \phi_z^\ast = 1 \mbox{ for at least one vertex } \hat{z} \mbox{ of } T \} \: .
  \label{eq:vertex_patch_prime}
\end{equation}
For the partition of unity (\ref{eq:partition_of_unity}) to hold, we require the triangulation $\cT_h$ to be such that each vertex on
$\Gamma_N$ is connected to an interior edge.
For the localization of the reconstruction algorithm, we will also need the local subspaces
\begin{equation}
  \begin{split}
    \bSigma_{h,z}^{\Delta} & = \{ \btau_h \in \bSigma_h^{\Delta} : \btau_h \cdot \bn = \bzero \mbox{ on }
    \partial \omega_z^\ast \backslash \partial \Omega \: , \:
    \btau_h \equiv \bzero \mbox{ on } \Omega \backslash \omega_z^\ast \} \: , \\
    \bZ_{h,z} & = \{ \left. \bz_h \right|_{\omega_z^\ast} : \bz_h \in \bZ_h \} \: , \\
    \bX_{h,z} & = \{ \left. \bgamma_h \right|_{\omega_z^\ast} : \bgamma_h \in \bX_h \} \: ,
  \end{split}
 \label{eq:local_subspaces}
\end{equation}
as well as the local sets of sides $\cS_{h,z}^\ast := \lbrace S \in \cS_h^\ast : S \subset \overline{\omega}_z^\ast \rbrace$.

The conditions in (\ref{eq:equilibration_conditions}) can be satisfied by a sum of patch-wise contributions
\begin{equation}
  \bsigma_h^\Delta = \sum_{z \in \cV_h^\ast} \bsigma_{h,z}^\Delta \: ,
  \label{eq:patch_decomposition}
\end{equation}
where, for each $z \in \cV_h^\ast$, $\bsigma_{h,z}^\Delta \in  \bSigma_{h,z}^{\Delta}$ is computed such that
$\Vert \bsigma_{h,z}^\Delta \Vert_{\omega_z^\ast}^2 $ is minimized subject to the following constraints:
\begin{equation}
  \begin{split}
    ( \div \: \bsigma_{h,z}^\Delta , \bz_{h,z} )_{\omega_z^\ast,h}
    & = - ( ( \bff + \div \: \bsigma_h (\bu_h,p_h) ) \phi_z^\ast , \bz_{h,z}  )_{\omega_z^\ast,h}
    \mbox{ for all } \bz_{h,z} \in \bZ_{h,z} \: , \\
    \langle \llbracket \bsigma_{h,z}^\Delta \cdot \bn \rrbracket_S , \bzeta \rangle_S
    = & - \langle \llbracket \bsigma_h (\bu_h,p_h) \cdot \bn \rrbracket_S^\ast \, \phi_z^\ast , \bzeta \rangle_S
    \mbox{ for all } \bzeta \in P_k (S)^d \: , \: S \in \cS_{h,z}^\ast \: , \\
    ( \bsigma_{h,z}^\Delta , \bJ^d (\bgamma_{h,z}) )_{\omega_z^\ast} & = 0
    \hspace{3.7cm} \mbox{ for all } \bgamma_{h,z} \in \bX_{h,z} \: .
  \end{split}
  \label{eq:equilibration_conditions_local}
\end{equation}
For each $z \in \cV_h^\ast$, this is a linearly-constrained quadratic minimization problem of low dimension.
In a similar way as in \cite{CaiZha:12a}, it can be solved in the following two substeps using the subspace
\begin{equation}
  \bSigma_{h,z}^{\Delta,\div} :=  \lbrace \btau_h \in \bSigma_{h,z}^\Delta : \llbracket \btau_h \cdot \bn \rrbracket_S = \bzero
  \mbox{ for all } S \in \cS_{h,z}^\ast \: , \: \div \: \btau_h = \bzero \rbrace \: :
\end{equation}
\textit{Step 1:} Compute an arbitrary $\bsigma_{h,z}^{\Delta,1} \in   \bSigma_{h,z}^{\Delta}$ satisfying the first two equalities in (\ref{eq:equilibration_conditions_local}).\\
\textit{Step 2:} Compute $\bsigma_{h,z}^{\Delta,2} \in  \bSigma_{h,z}^{\Delta,\div}$  such that
$\Vert \bsigma_{h,z}^{\Delta,1} + \bsigma_{h,z}^{\Delta,2} \Vert_{\omega_z^\ast}^2$ is minimized and
\begin{equation}
( \bsigma_h^{\Delta,2} , \bJ^d (\bgamma_h) )_{\omega_z^\ast}  = - ( \bsigma_h^{\Delta,1} , \bJ^d (\bgamma_h) )_{\omega_z^\ast}
\mbox{ for all } \bgamma_h \in \bX_{h,z}
\end{equation}
is satisfied. Finally, set $\bsigma_{h,z}^\Delta = \bsigma_{h,z}^{\Delta,1} + \bsigma_{h,z}^{\Delta,2}$. 

For the computation of $\bsigma_{h,z}^{\Delta,1}$ in \textit{Step 1}, the explicit formulas from \cite{CaiZha:12a} can be used.
The remaining minimization problem in \textit{Step 2} is of much smaller size than for the original problem
(\ref{eq:equilibration_conditions_local}).

We remark that the modification of the partition of unity (\ref{eq:partition_of_unity}) is only necessary in the two-dimensional case
and even then it can be avoided if the triangulation is such that each vertex $z_N \in \Gamma_N$ is connected to at least two
edges which are not part of $\Gamma_N$. However, using the standard partition of unity without this mesh property will (in 2D)
lead to patches $\omega_z$ around vertices on $\Gamma_N$ consisting of only two triangles. 
For those patches the local space $\bSigma^\Delta_{h,z}$ does not exhibit enough degrees of freedom to satisfy all equations in
(\ref{eq:equilibration_conditions_local}) unless $\partial\omega_z \cap \Gamma_D \neq \emptyset$. In the three-dimensional case
it is sufficient for each vertex $z_N \in \Gamma_N$ to be connected to one interior edge.

\section{Well-posedness of the local problems on vertex patches}

\label{sec-solvability}

The local minimization problem subject to the constraints (\ref{eq:equilibration_conditions_local}) can be guaranteed to possess a
unique solution if, for every right hand side, a function $\bsigma_{h,z}^\Delta \in \bSigma_{h,z}^\Delta$ exists such that the
constraints (\ref{eq:equilibration_conditions_local}) are satisfied. To this end, the range of the linear operator on the left-hand
side of (\ref{eq:equilibration_conditions_local}) is of interest.

\begin{proposition}
  The subspace
  \begin{equation}
    \begin{split}
      \bR_{h,z}^\perp := & \{ ( \bz_{h,z} , \bgamma_{h,z} ) \in \bZ_{h,z} \times \bX_{h,z} :
      \exists \bzeta_S \in P_k (S)^d \: , \: S \in \cS_{h,z}^\ast \mbox{ such that } \\
      & ( \div \: \bsigma_{h,z}^\Delta , \bz_{h,z} )_{\omega_z^\ast,h}
      - \sum_{S \in \cS_{z,h}^\ast} \langle \llbracket \bsigma_{h,z}^\Delta \cdot \bn \rrbracket_S , \bzeta_S \rangle_S
      + ( \bsigma_{h,z}^\Delta , \bJ^d (\bgamma_{h,z}) )_{\omega_z^\ast} = 0 \\
      & \hspace{6cm} \mbox{ holds for all } \bsigma_{h,z}^\Delta \in \bSigma_{h,z}^\Delta \} \: ,
    \end{split}
    \label{eq:adjoint_null_space}
  \end{equation}
  i.e., the null space of the adjoint operator associated with the constraints (\ref{eq:equilibration_conditions_local}), can be
  characterized as follows:
  \begin{equation}
    \begin{split}
      \bR_{h,z}^\perp & = \{ ( \bzero , \bzero ) \} \mbox{ if } \partial \omega_z^\ast \cap \Gamma_D \neq \emptyset \: , \\
      \bR_{h,z}^\perp & = \{ ( \brho , \btheta ) \in \bR\bM \times \R^{d (d-1)/2} : \bJ^d (\btheta) = \bas \: \bnabla \brho \}
      \mbox{ if } \partial \omega_z^\ast \cap \Gamma_D = \emptyset \: ,
    \end{split}
    \label{eq:adjoint_null_space_characterization}
  \end{equation}
  where $\bR\bM = \{ \brho : \omega_z^\ast \rightarrow \R^d : \bepsilon (\brho) = \bzero \}$ denotes the space of rigid body modes
  and $\bas \: \btau = (\btau - \btau^T)/2$ stands for the anti-symmetric part of a function
  $\btau :  \Omega \rightarrow \R^{d \times d}$.
  \label{prop-adjoint_null_space_characterization}
\end{proposition}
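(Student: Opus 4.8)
The plan is to characterize $\bR_{h,z}^\perp$ by identifying when a triple consisting of $\bz_{h,z} \in \bZ_{h,z}$, a collection $\{\bzeta_S\}_{S \in \cS_{h,z}^\ast}$, and $\bgamma_{h,z} \in \bX_{h,z}$ produces the zero functional on $\bSigma_{h,z}^\Delta$. The central tool is integration by parts on each element $T \subset \omega_z^\ast$: for $\btau_h \in \bSigma_{h,z}^\Delta$ one writes
\begin{equation*}
  ( \div \: \btau_h , \bz_{h,z} )_T = - ( \btau_h , \bnabla \bz_{h,z} )_T
  + \langle \btau_h \cdot \bn , \bz_{h,z} \rangle_{\partial T} \: ,
\end{equation*}
and then sums over $T$, collecting the boundary terms side by side. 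Since $\btau_h \cdot \bn = \bzero$ on $\partial \omega_z^\ast \backslash \partial \Omega$ and $\btau_h \equiv \bzero$ outside $\omega_z^\ast$, only the interior sides of the patch and the sides on $\partial\Omega$ survive; the latter reduce to sides on $\Gamma_N$ because sides on $\Gamma_D$ carry no $\bzeta_S$. Combining the three terms in \eqref{eq:adjoint_null_space} gives a single broken bilinear form tested against all $\btau_h \in \bSigma_{h,z}^\Delta$, namely an expression of the form $-(\btau_h, \bnabla_h \bz_{h,z} - \bJ^d(\bgamma_{h,z}))_h$ plus jump contributions $\langle \llbracket \btau_h \cdot \bn \rrbracket_S , \bz_{h,z} - \bzeta_S \rangle_S$ (with $\bz_{h,z}$ replaced by its appropriate one-sided trace on Neumann sides). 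The first step is to argue that both the element-wise volume term and the side-wise jump term must vanish independently, because the degrees of freedom of $RT_k$ on a single element $T$ — the interior moments against $P_{k-1}(T)^d$ (equivalently against $\bnabla P_k(T)$ together with a complement) and the face moments against $P_k(S)^d$ — can be prescribed freely within $\bSigma_{h,z}^\Delta$ as long as the patch has enough elements. This freedom is exactly what the well-posedness discussion at the end of Section \ref{sec-local_equilibration} guarantees for the admissible patches.

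From the vanishing of the jump term one concludes $\bzeta_S = \bz_{h,z}|_{T_-} = \bz_{h,z}|_{T_+}$ on each interior side, so that $\bz_{h,z}$ is in fact continuous across $\omega_z^\ast$, i.e. $\bz_{h,z} \in H^1(\omega_z^\ast)^d$; on a Neumann side $\bzeta_S$ equals the single trace present, which imposes no further constraint. From the vanishing of the volume term, tested against sufficiently rich $\btau_h$, one obtains
\begin{equation*}
  \bnabla \bz_{h,z} = \bJ^d (\bgamma_{h,z}) \quad \text{on } \omega_z^\ast \: .
\end{equation*}
Taking the symmetric part yields $\bepsilon(\bz_{h,z}) = \bzero$, so $\bz_{h,z} \in \bR\bM$; this already forces $\bz_{h,z}$ and $\bgamma_{h,z}$ to be (piecewise, hence globally) polynomials of degree at most one, consistent with $\bR\bM \subset \bZ_{h,z}$ and the constant-valued skew part lying in $\bX_{h,z}$ provided $k \geq 1$. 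Taking the anti-symmetric part gives $\bJ^d(\bgamma_{h,z}) = \bas\,\bnabla \bz_{h,z}$, which with $\brho := \bz_{h,z}$, $\btheta := \bgamma_{h,z}$ is precisely the description in the second line of \eqref{eq:adjoint_null_space_characterization}.

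It remains to handle the dichotomy on the boundary. If $\partial\omega_z^\ast \cap \Gamma_D = \emptyset$ the argument above gives the stated rigid-body characterization, and conversely any such $(\brho,\btheta)$ is checked directly to annihilate the form by reversing the integration by parts — here one uses that on $\Gamma_N$-sides the relevant trace contribution is absorbed into $\bzeta_S$, which is why the modified partition of unity excludes Neumann-vertex patches and why the Neumann sides cause no obstruction. If $\partial\omega_z^\ast \cap \Gamma_D \neq \emptyset$, one picks a test tensor $\btau_h \in \bSigma_{h,z}^\Delta$ supported near a Dirichlet side with nonzero normal trace there; since no constraint ties $\btau_h \cdot \bn$ on $\Gamma_D$ to any $\bzeta_S$, the surviving boundary term $\langle \btau_h \cdot \bn, \bz_{h,z}\rangle_{S}$ over $S \subset \Gamma_D$ must vanish for all such $\btau_h$, forcing $\bz_{h,z} = \bzero$ on that side; combined with $\bz_{h,z} \in \bR\bM$ this gives $\bz_{h,z} \equiv \bzero$ and then $\bJ^d(\bgamma_{h,z}) = \bnabla \bz_{h,z} = \bzero$, whence $\bgamma_{h,z} = \bzero$. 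I expect the main obstacle to be the bookkeeping in the first paragraph: carefully justifying that the interior and boundary contributions decouple requires knowing that $\bSigma_{h,z}^\Delta$ is rich enough — i.e. that one can realize arbitrary element-interior moments and arbitrary interior-side normal moments simultaneously under the patch constraints — which is exactly the condition on admissible patches discussed around \eqref{eq:vertex_patch_prime}, and this is the point where the geometric hypotheses on $\cT_h$ near $\Gamma_N$ genuinely enter.
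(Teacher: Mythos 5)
The converse inclusion (that every rigid body mode $\brho$ with $\bJ^d(\btheta)=\bas\,\bnabla\brho$ and $\bzeta_S=\brho|_S$ annihilates the form, and that a Dirichlet side then kills these modes) is fine and essentially matches the paper. The gap is in your main step, the claim that the element-wise volume term and the side-wise jump terms ``must vanish independently, because the degrees of freedom of $RT_k$ on a single element can be prescribed freely.'' This does not work element by element. First, the two groups of terms do not decouple: an $RT_k$ function with prescribed normal-trace (face) moments is not zero in the interior, so varying a face dof changes the volume contribution $-(\btau_h,\bnabla\bz_{h,z}-\bJ^d(\bgamma_{h,z}))_T$ as well. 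Second, even after setting all face dofs to zero, the remaining interior test functions of $RT_k(T)^d$ only control moments against $P_{k-1}(T)$ tensors, whereas $\bnabla\bz_{h,z}-\bJ^d(\bgamma_{h,z})$ is of degree $k$ (because $\bgamma_{h,z}\in\bX_{h,z}$ has degree $k$); orthogonality to this bubble space therefore does not give $\bnabla\bz_{h,z}=\bJ^d(\bgamma_{h,z})$ pointwise, and without that you get neither $\bz_{h,z}\in\bR\bM$ nor the continuity of $\bz_{h,z}$ across interior sides. Concretely, for $k=1$, $d=2$ the zero-normal-trace bubbles in $RT_1(T)^2$ carry only $4$ dofs, far too few to test the relevant space of tensors. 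Your appeal to the ``well-posedness discussion'' at the end of Section~\ref{sec-local_equilibration} is also circular: that discussion concerns which patches admit solvable local problems and itself rests on the present proposition.

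What actually closes this step in the paper is a patch-global stability result, not local dof counting: one restricts to the $H(\div)$-conforming subspace of $\bSigma_{h,z}^\Delta$ (jumps set to zero), so the condition reduces to $(\div\,\bsigma_{h,z}^\Delta,\bz_{h,z})_{\omega_z^\ast,h}+(\bsigma_{h,z}^\Delta,\bJ^d(\bgamma_{h,z}))_{\omega_z^\ast}=0$ for all conforming Raviart--Thomas fields, and then invokes the inf-sup stability of the weakly symmetric mixed elasticity triple $RT_k^d$ / discontinuous $P_k$ displacements / continuous $P_k$ rotations, proved in \cite{BofBreFor:09}. That inf-sup property is precisely the nontrivial ingredient forcing $(\bz_{h,z},\bgamma_{h,z})$ into the rigid-body kernel described in \eqref{eq:adjoint_null_space_characterization}; your argument would need either to cite such a stability result or to reprove it, and the elementwise decoupling you propose cannot substitute for it.
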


\begin{proof}
  If we restrict ourselves to $\bsigma_{h,z}^\Delta \in \bSigma_{h,z}^\Delta$ with
  $\llbracket \bsigma_{h,z}^\Delta \cdot \bn \rrbracket_S = 0$ for all $S \in \omega_z^\ast$, then we end up with the
  $H (\div)$-conforming Raviart-Thomas space $RT_k^d$. The condition in (\ref{eq:adjoint_null_space}) for the definition of
  $\bR_{h,z}^\perp$ simplifies to
  \begin{equation}
    ( \div \: \bsigma_{h,z}^\Delta , \bz_{h,z} )_{\omega_z^\ast,h}
    + ( \bsigma_{h,z}^\Delta , \bJ^d (\bgamma_{h,z}) )_{\omega_z^\ast} = 0 \: .
    \label{eq:condition_conforming}
  \end{equation}
  The inf-sup stability of the finite element combination $RT_k^d$ (for the stress) with $\bZ_{h,z} \times \bX_{h,z}$ (for the
  displacement and rotation), shown in \cite{BofBreFor:09}, implies that $\bR_{h,z}^\perp$ is contained in the null space of
  the continuous problem given by (\ref{eq:adjoint_null_space_characterization}).
  
  On the other hand, $\bR_{h,z}^\perp$ does indeed contain all the functions given in (\ref{eq:adjoint_null_space_characterization})
  since, setting $( \bz_{h,z} , \bgamma_{h,z} ) = ( \brho , \btheta )$ with $\bJ^d (\btheta) = \bas \: \bnabla \brho$ and
  $\bzeta_S = \left. \brho \right|_S$, we have, for all $\bsigma_{h,z}^\Delta \in \bSigma_{h,z}^\Delta$, that
  \begin{equation}
    \begin{split}
      & ( \div \: \bsigma_{h,z}^\Delta , \bz_{h,z} )_{\omega_z^\ast,h}
      - \sum_{S \in \cS_{h,z}^\ast} \langle \llbracket \bsigma_{h,z}^\Delta \cdot \bn \rrbracket_S , \bzeta_S \rangle_S
      + ( \bsigma_{h,z}^\Delta , \bJ^d (\bgamma_{h,z}) )_{\omega_z^\ast} \\
      & = ( \div \: \bsigma_{h,z}^\Delta , \brho )_{\omega_z^\ast,h}
      - \sum_{S \in \cS_{h,z}^\ast} \langle \llbracket \bsigma_{h,z}^\Delta \cdot \bn \rrbracket_S , \brho \rangle_S
      + ( \bsigma_{h,z}^\Delta , \bJ^d (\btheta) )_{\omega_z^\ast} \\
      & = - ( \bsigma_{h,z}^\Delta , \bnabla \brho )_{\omega_z^\ast}
      + ( \bsigma_{h,z}^\Delta , \bas \: \bnabla \brho )_{\omega_z^\ast}
      = - ( \bsigma_{h,z}^\Delta , \bepsilon (\brho) )_{\omega_z^\ast} = 0
    \end{split}
  \end{equation}
  holds (note that $\bepsilon (\brho) = \bzero$ for $\brho \in \bR\bM$).
\end{proof}

Proposition \ref{prop-adjoint_null_space_characterization} will now be used in order to show that it is possible to satisfy the
constraints in (\ref{eq:equilibration_conditions_local}). For vertices $z \in \cV_h^\ast$ with
$\partial \omega_z^\ast \cap \Gamma_D \neq \emptyset$, there is no restriction on the right-hand side in
(\ref{eq:equilibration_conditions_local}) and there will always be a unique solution. However, if
$\partial \omega_z^\prime \cap \Gamma_D = \emptyset$, the range of the
left-hand side operator does not cover the full space and therefore a compatibility condition needs to be fulfilled by the
the right-hand side in (\ref{eq:equilibration_conditions_local}). More precisely, the right-hand side has to be perpendicular to
$\bR_{h,z}^\perp$ which, in view of Proposition \ref{prop-adjoint_null_space_characterization}, means that
\begin{equation}
  ( ( \bff + \div \: \bsigma_h (\bu_h,p_h) ) \phi_z^\ast , \brho  )_{\omega_z^\ast,h}
  = \sum_{S \in \cS_{h,z}^\ast}
  \langle \llbracket \bsigma_h (\bu_h,p_h) \cdot \bn \rrbracket_S^\ast \, \phi_z^\ast , \brho \rangle_S
  \label{eq:compatibility_conditions}
\end{equation}
has to hold for all $( \brho , \btheta ) \in \bR\bM \times \R^{d (d-1) / 2}$ with $\bJ^d (\btheta) = \bas \: \bnabla \brho$. That this is
indeed true can be seen as follows: The first term in (\ref{eq:compatibility_conditions}) can be rewritten as
\begin{equation}
  \begin{split}
    ( ( \bff & + \div \: \bsigma_h (\bu_h,p_h) ) \phi_z^\ast , \brho  )_{\omega_z^\ast,h}
    = ( \bff + \div \: \bsigma_h (\bu_h,p_h) , \phi_z^\ast  \brho  )_{\omega_z^\ast,h} \\
    & = ( \bff , \phi_z^\ast \brho  )_{\omega_z^\ast}
    + \sum_{S \in \cS_{h,z}^\ast}
    \langle \llbracket \bsigma_h (\bu_h,p_h) \cdot \bn \rrbracket_S , \phi_z^\ast \brho \rangle_S
    - ( \bsigma_h (\bu_h,p_h) , \bnabla (\phi_z^\ast \brho) )_{\omega_z^\ast}
  \end{split}
\end{equation}
by partial integration. Using the fact that $\llbracket \: \cdot \: \rrbracket_S$ and $\llbracket \: \cdot \: \rrbracket_S^\ast$ differ only
on sides $S \subset \Gamma_N$ and recalling that $\bsigma_h (\bu_h,p_h)$ is symmetric, we end up with
\begin{equation}
  \begin{split}
    ( ( \bff + \div \: & \bsigma_h (\bu_h,p_h) ) \phi_z^\ast , \brho  )_{\omega_z^\ast,h}
    = ( \bff , \phi_z^\ast \brho  )_{\omega_z^\ast} + \langle \bg , \phi_z^\ast \brho \rangle_{\Gamma_N} \\
    & + \sum_{S \in \cS_{h,z}^\ast}
    \langle \llbracket \bsigma_h (\bu_h,p_h) \cdot \bn \rrbracket_S^\ast , \phi_z^\ast \brho \rangle_S
    - ( \bsigma_h (\bu_h,p_h) , \bepsilon (\phi_z^\ast \brho) )_{\omega_z^\ast} \: .
  \end{split}
  \label{eq:compatibility_conditions_final}
\end{equation}
Using the fact that $\phi_z^\ast \brho \in \bV_h$, the first equation in (\ref{eq:disp_pressure_discrete}) leads to
(\ref{eq:compatibility_conditions}).

\section{Vertex-patch estimates for the anti-symmetric and volumetric stress errors}

\label{sec-estimates_asym_volum}

This section provides upper bounds for two terms that will arise later in the derivation of the error estimators. These terms
involve the anti-symmetric and deviatoric stress parts and are crucial for the treatment of linear elasticity with
guaranteed upper bound which are only dependent on the shape of the triangulation and not on the considered problem,
i.e., the location and type of the boundary conditions.
For $\btau : \Omega \rightarrow \R^{d \times d}$, let us denote by
$\bdev \: \btau = \btau - (\tr \: \btau) \bI / d$ the deviatoric, i.e. trace-free, part.

\begin{lemma}
  Let $( \bu_h , p_h ) \in \bV_h \times Q_h$ be the solution of (\ref{eq:disp_pressure_discrete}) and let
  $\bsigma_h^R \in \bSigma_h^R$ be a stress reconstruction satisfying the weak symmetry condition
  $( \bsigma_h^R , \bJ^d (\bgamma_h) ) = 0$ for all $\bgamma_h \in \bX_h$. Then,
  \begin{equation}
    \left| ( \bas \: \bsigma_h^R , \bnabla ( \bu - \bu_h ) ) \right|
    \leq C_K \| \bas \: \bsigma_h^R \| \: \| \bepsilon ( \bu - \bu_h ) \|
    \label{eq:bound_asym}
  \end{equation}
  holds with a constant $C_K$ which depends only on (the largest interior angle in) the triangulation $\cT_h$.
  
  Moreover, if $Q_h$ is such that it contains the space of piecewise linear continuous functions, then
  \begin{equation}
    \left| ( \tr ( \bsigma - \bsigma_h^R ) , \div \: \bu_h - \frac{1}{\lambda} p_h ) \right|
    \leq C_A \| \bdev ( \bsigma - \bsigma_h^R ) \| \: \| \div \: \bu_h - \frac{1}{\lambda} p_h \| \: ,
    \label{eq:bound_volum}
  \end{equation}
  where, again, $C_A$ depends only on (the largest interior angle in) the triangulation $\cT_h$.
  \label{lemma-bounds_asym_volum}
\end{lemma}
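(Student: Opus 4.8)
The plan is to prove the two inequalities separately, in each case by inserting a suitable auxiliary function and exploiting the weak‐symmetry/projection property to switch from the ``full gradient'' or ``trace'' quantity to its symmetric/deviatoric counterpart, at the cost of a constant that can be controlled patch by patch.

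For \eqref{eq:bound_asym}, the key observation is that since $\bas\:\bsigma_h^R$ is an $L^2$ function valued in skew-symmetric matrices, it is orthogonal to the symmetric part of any gradient; in particular $(\bas\:\bsigma_h^R,\bnabla\bw)=(\bas\:\bsigma_h^R,\bas\:\bnabla\bw)$ for all $\bw\in H^1$. Writing $\bas\:\bnabla(\bu-\bu_h)=\bJ^d(\bfeta)$ for a suitable $d(d-1)/2$-vector field $\bfeta$, I would split $\bfeta$ using the partition of unity $\{\phi_z^\ast\}$ into $\bfeta=\sum_z\phi_z^\ast\bfeta$ and, on each patch, subtract the best piecewise polynomial (of degree $k$) approximation $\bgamma_{h,z}\in\bX_{h,z}$ to $\phi_z^\ast\bfeta$ restricted to $\omega_z^\ast$. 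The weak symmetry $(\bsigma_h^R,\bJ^d(\bgamma_h))=0$ kills the polynomial part globally, leaving $(\bas\:\bsigma_h^R,\bnabla(\bu-\bu_h))=\sum_z(\bsigma_h^R,\bJ^d(\phi_z^\ast\bfeta-\bgamma_{h,z}))_{\omega_z^\ast}$, which by Cauchy--Schwarz is bounded by $\sum_z\|\bas\:\bsigma_h^R\|_{\omega_z^\ast}\,\|\phi_z^\ast\bfeta-\bgamma_{h,z}\|_{\omega_z^\ast}$. A local Poincar\'e/approximation estimate of the form $\|\phi_z^\ast\bfeta-\bgamma_{h,z}\|_{\omega_z^\ast}\le c_z\|\bepsilon(\bu-\bu_h)\|_{\omega_z^\ast}$ — obtained by a compactness/scaling argument comparing the $L^2$ norm of the skew tensor $\bJ^d(\bfeta)=\bas\:\bnabla(\bu-\bu_h)$ to a local approximation of it, with a constant depending only on the shape of $\omega_z^\ast$ — then finishes the bound after summing and applying Cauchy--Schwarz over patches, with $C_K$ the maximum of the $c_z$ (hence dependent only on the largest interior angle).

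For \eqref{eq:bound_volum}, I note that $\tr\:\btau$ and $\bdev\:\btau$ are related by $\tr(\btau-\bdev\:\btau)\equiv 0$ in the sense that $\btau=\bdev\:\btau+\tfrac1d(\tr\:\btau)\bI$, and that the second equation in \eqref{eq:disp_pressure_discrete} gives $(\div\:\bu_h-\tfrac1\lambda p_h,q_h)=0$ for all $q_h\in Q_h$; in particular $\div\:\bu_h-\tfrac1\lambda p_h$ is $L^2$-orthogonal to all continuous piecewise polynomials, hence (since $Q_h$ contains the continuous piecewise linears) to the best such approximation of any scalar test function. I would write the scalar quantity $s:=\tr(\bsigma-\bsigma_h^R)$ and insert its piecewise-linear best approximation $s_h$: by orthogonality, $(s,\div\:\bu_h-\tfrac1\lambda p_h)=(s-s_h,\div\:\bu_h-\tfrac1\lambda p_h)$, and then a local approximation estimate for $s-s_h$ in terms of its gradient, combined with the identity $\bnabla\,\tr\:\btau$ being controlled by $\bnabla\:\btau$ — more precisely a localized argument exactly parallel to the first part, now applied with the ``deviatoric'' splitting $\btau=\bdev\:\btau+\tfrac1d(\tr\:\btau)\bI$ and the relation $\tr(\bsigma-\bsigma_h^R)=d\cdot(\text{trace-part error})$ — reduces $\|s-s_h\|$ to a multiple of $\|\bdev(\bsigma-\bsigma_h^R)\|$, with the constant $C_A$ again a shape-regularity constant. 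The cleanest route is probably to exploit that $s$ (after subtracting a constant on $\Omega$) satisfies $\|s-\bar s\|\lesssim\|\bdev(\bsigma-\bsigma_h^R)\|$ because a trace-free matrix field plus a constant-trace field can reproduce any mean-free scalar only through its deviatoric part—one must be careful that $\bsigma$ and $\bsigma_h^R$ are divergence-compatible so that their difference has the needed regularity; alternatively one localizes via $\{\phi_z^\ast\}$ as above.

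The main obstacle in both parts is establishing the \emph{local} estimate with a constant that genuinely depends only on the patch geometry: one must argue that on each vertex patch $\omega_z^\ast$ the $L^2$ distance from $\phi_z^\ast\bfeta$ (resp.\ from the trace field) to the discrete space $\bX_{h,z}$ (resp.\ $Q_h|_{\omega_z^\ast}$) is bounded by the $L^2$ norm of $\bepsilon(\bu-\bu_h)$ (resp.\ of $\bdev(\bsigma-\bsigma_h^R)$) times a shape-dependent constant. This is a Korn-type/Poincar\'e-type inequality on a single patch, where, crucially, the constant stays bounded because the patch is a union of a uniformly bounded number of shape-regular simplices sharing a vertex, so a standard compactness argument on a reference configuration (together with affine equivalence and the fact that there are only finitely many combinatorial patch types up to shape regularity) delivers a uniform bound; the subtlety is handling the patches that touch $\Gamma_N$, which is exactly why the modified partition of unity $\phi_z^\ast$ and the exclusion of $\Gamma_N$-vertices were set up in Section~\ref{sec-local_equilibration}, ensuring each patch meets either $\Gamma_D$ or the interior in a way that makes the local inequality valid without any global constant.
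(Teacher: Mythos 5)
Your strategy for (\ref{eq:bound_asym}) --- insert a discrete test function through the weak-symmetry condition, localize with a partition of unity, and invoke a patchwise Korn-type inequality --- is essentially the paper's, but one step fails as written: the patchwise best approximations $\bgamma_{h,z}\in\bX_{h,z}$ to $\phi_z^\ast\bfeta$, extended by zero outside $\omega_z^\ast$, are in general discontinuous across $\partial\omega_z^\ast$, so $\sum_z\bgamma_{h,z}$ does \emph{not} belong to $\bX_h$ and the global weak symmetry $(\bsigma_h^R,\bJ^d(\bgamma_h))=0$ cannot be used to ``kill'' it. The repair is to restrict to $\bgamma_{h,z}=\balpha_z\phi_z$ with one constant vector $\balpha_z\in\R^{d(d-1)/2}$ per vertex (the standard hat functions over \emph{all} vertices suffice here; the modified partition $\phi_z^\ast$ is not needed in this lemma): then $\sum_z\balpha_z\phi_z$ is continuous piecewise linear, hence in $\bX_h$ since $k\ge 1$, and after moving $\phi_z$ onto $\bas\:\bsigma_h^R$ the local term becomes $\inf_{\balpha_z}\|\bnabla(\bu-\bu_h)-\bJ^d(\balpha_z)\|_{\omega_z}$, which is bounded by $C_{K,z}\|\bepsilon(\bu-\bu_h)\|_{\omega_z}$ by Korn's inequality on the vertex patch --- this is exactly the ``compactness/scaling'' estimate you postulate, and is what the paper takes from Horgan; a Cauchy--Schwarz and counting argument over patches (each element lies in $d+1$ patches) then yields $C_K=(d+1)\max_z C_{K,z}$.

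For (\ref{eq:bound_volum}) the insertion of a $Q_h$-function via the second equation of (\ref{eq:disp_pressure_discrete}) is correct, but the core estimate is missing. What is needed on each patch is $\|\tr\:\btau-\beta_z\|_{\omega_z}\le C_{A,z}\,\|\bdev\:\btau\|_{\omega_z}$ for $\btau\in H(\div,\omega_z)$ with $\div\:\btau=\bzero$, where $\beta_z$ is the patch mean of $\tr\:\btau$ --- the ``dev-div lemma''. This is a genuine PDE-level inequality (tied to Stokes inf-sup stability, or in 2D to Korn via the $\bnabla^\perp$ representation), and it does not follow from the algebraic split $\btau=\bdev\:\btau+\frac1d(\tr\:\btau)\bI$, nor from an approximation bound of $s-s_h$ by a gradient: $s=\tr(\bsigma-\bsigma_h^R)$ is only in $L^2$, and no gradient of the stress error is controlled, so your first route collapses. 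Your ``cleanest route'' with a single global mean $\bar s$ would moreover produce a constant depending on $\Omega$ and the boundary conditions rather than only on the patch shapes, which defeats the very purpose of the lemma; one must subtract patchwise means assembled through the hat functions, $\beta_h=\sum_z\beta_z\phi_z\in Q_h$ (this is precisely why the hypothesis that $Q_h$ contains the continuous piecewise linears is there). Finally, the dev-div lemma requires $\div(\bsigma-\bsigma_h^R)=\bzero$ on each patch, i.e.\ the equilibration property of $\bsigma_h^R$ together with $\bff=\cP_h^k\bff$; you allude to ``divergence compatibility'' but it has to be stated and actually used at this point.
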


\begin{proof}
  For both inequalities (\ref{eq:bound_asym}) and (\ref{eq:bound_volum}), the (standard) partition of unity
  \begin{equation}
    1 \equiv \sum_{z \in \cV_h} \phi_z \mbox{ on } \Omega
    \label{eq:partition_all}
  \end{equation}
  with respect to the set of all vertices in the triangulation $\cV_h$ is used. For proving (\ref{eq:bound_asym}), the weak symmetry
  property of the stress reconstruction $\bsigma_h^R$ implies
  \begin{equation}
    ( \bas \: \bsigma_h^R , \bnabla (\bu - \bu_h) ) = ( \bas \: \bsigma_h^R , \bnabla (\bu - \bu_h) - \bJ^d (\balpha_h) )
    \mbox{ for all } \balpha_h = \sum_{z \in \cV_h} \balpha_z \phi_z
  \end{equation}
  with $\balpha_z \in \R^{d (d-1)/2}$.
  Using (\ref{eq:partition_all}) we are led to
  \begin{equation}
    \begin{split}
      | ( \bas \: \bsigma_h^R , \bnabla (\bu - \bu_h) ) |
      & = | ( \bas \: \bsigma_h^R , \sum_{z \in \cV_h} \left( \bnabla (\bu - \bu_h) - \bJ^d (\balpha_z) \right) \phi_z ) | \\
      & = \left| \sum_{z \in \cV_h} ( \bas \: \bsigma_h^R ,
      \left( \bnabla (\bu - \bu_h) - \bJ^d (\balpha_z) \right) \phi_z )_{\omega_z} \right| \\
      & = \left| \sum_{z \in \cV_h} ( (\bas \: \bsigma_h^R) \phi_z , \bnabla (\bu - \bu_h) - \bJ^d (\balpha_z) )_{\omega_z} \right| \\
      & \leq \sum_{z \in \cV_h} \| (\bas \: \bsigma_h^R) \phi_z \|_{\omega_z}
      \| \bnabla (\bu - \bu_h) - \bJ^d (\balpha_z) \|_{\omega_z} \\
      & \leq \sum_{z \in \cV_h} \| \bas \: \bsigma_h^R \|_{\omega_z} \| \bnabla (\bu - \bu_h) - \bJ^d (\balpha_z) \|_{\omega_z} \: .
    \end{split}
    \label{eq:asym_partition}
  \end{equation}
  For all rigid body modes $\brho \in \bR\bM$, $\bnabla \brho = \bJ^d (\balpha_z)$ holds with some $\balpha_z \in \R^{d (d-1)/2}$
  and therefore
  \begin{equation}
    \inf_{\balpha_z} \| \bnabla (\bu - \bu_h) - \bJ^d (\balpha_z) \|_{\omega_z}
    \leq \inf_{\brho \in \bR\bM} \| \bnabla (\bu - \bu_h - \brho) \|_{\omega_z}
    \leq C_{K,z} \| \bepsilon (\bu - \bu_h) \|_{\omega_z}
    \label{eq:Korn_average}
  \end{equation}
  due to Korn's inequality (cf. \cite{Hor:95}). The constant $C_{K,z}$ obviously only depends on the geometry of the vertex patch
  $\omega_z$ or, more precisely, on its largest interior angle. If we define $C_K = (d+1) \max \{ C_{K,z} : z \in \cV_h \}$, we finally
  obtain from (\ref{eq:asym_partition}) that
  \begin{equation}
    \begin{split}
      | ( \bas \: \bsigma_h^R , & \bnabla (\bu - \bu_h) ) |
      \leq \frac{C_K}{d+1} \sum_{z \in \cV_h} \| \bas \: \bsigma_h^R \|_{\omega_z} \| \bepsilon (\bu - \bu_h) \|_{\omega_z} \\
      & \leq C_K \left( \frac{1}{d+1} \sum_{z \in \cV_h} \| \bas \: \bsigma_h^R \|_{\omega_z}^2 \right)^{1/2}
      \left( \frac{1}{d+1} \sum_{z \in \cV_h} \| \bepsilon (\bu - \bu_h) \|_{\omega_z}^2 \right)^{1/2} \\
      & = C_K \| \bas \: \bsigma_h^R \| \: \| \bepsilon (\bu - \bu_h) \|
    \end{split}
  \end{equation}
  holds, where we used the fact that each element (triangle or tetrahedron) is contained in exactly $d+1$ vertex patches.
  
  For proving (\ref{eq:bound_volum}), we observe that the second equation in (\ref{eq:disp_pressure_discrete}) together
  with our assumption on $Q_h$ implies
  \begin{equation}
    \begin{split}
      ( \tr (\bsigma - \bsigma_h^R) , \div \: \bu_h - \frac{1}{\lambda} p_h )
      & = ( \tr (\bsigma - \bsigma_h^R) - \beta_h , \div \: \bu_h - \frac{1}{\lambda} p_h ) \\
      & \mbox{ for all } \beta_h = \sum_{z \in \cV_h} \beta_z \phi_z \: , \: \beta_z \in \R \: .
    \end{split}
  \end{equation}
  Again using the partition of unity (\ref{eq:partition_all}), we obtain
  \begin{equation}
    \begin{split}
      ( \tr (\bsigma - \bsigma_h^R) , \div \: \bu_h - \frac{1}{\lambda} p_h )
      & = \sum_{z \in \cV_h}
      ( (\tr (\bsigma - \bsigma_h^R) - \beta_z) \phi_z , \div \: \bu_h - \frac{1}{\lambda} p_h )_{\omega_z} \\
      & = \sum_{z \in \cV_h}
      ( \tr (\bsigma - \bsigma_h^R) - \beta_z , (\div \: \bu_h - \frac{1}{\lambda} p_h) \phi_z )_{\omega_z} .
    \end{split}
    \label{eq:volum_partition}
  \end{equation}
  We choose $\beta_z$ in such a way that $( \tr (\bsigma - \bsigma_h^R) - \beta_z , 1 )_{\omega_z} = 0$
  and use the ``dev-div lemma'' (cf. \cite[Prop. 9.1.1]{BofBreFor:13})
  \begin{equation}
    \| \tr \: \btau - \beta_z \|_{\omega_z} \leq C_{A,z} \| \bdev  \: \btau \|_{\omega_z}
    \label{eq:dev_div_average}
  \end{equation}
  which holds for any $\tau \in H (\div,\omega_z)$ with $\div \: \btau = \bzero$. Since $\div (\bsigma - \bsigma_h^R) = \bzero$
  this leads to
  \begin{equation}
    \begin{split}
      \left| ( \tr (\bsigma - \bsigma_h^R) - \beta_z , \right.
      & \left. (\div \: \bu_h - \frac{1}{\lambda} p_h) \phi_z )_{\omega_z} \right| \\
      & \leq \| \tr (\bsigma - \bsigma_h^R) - \beta_z \|_{\omega_z} \| (\div \: \bu_h - \frac{1}{\lambda} p_h) \phi_z \|_{\omega_z} \\
      & \leq C_{A,z} \| \bdev (\bsigma - \bsigma_h^R) \|_{\omega_z} \| \div \: \bu_h - \frac{1}{\lambda} p_h \|_{\omega_z} \: ,
    \end{split}
  \end{equation}
  where $C_{A,z}$ depends only on the shape of $\omega_z$.
  Setting $C_A = (d+1) \max \{ C_{A,z} : z \in \cV_h \}$ and inserting this into (\ref{eq:volum_partition}) finally leads to
  \begin{equation}
    \begin{split}
      & | ( \tr (\bsigma - \bsigma_h^R) , \div \: \bu_h - \frac{1}{\lambda} p_h ) |
      \leq \sum_{z \in \cV_h} C_{A,z} \| \bdev (\bsigma - \bsigma_h^R) \|_{\omega_z}
      \| \div \: \bu_h - \frac{1}{\lambda} p_h \|_{\omega_z} \\
      & \leq C_A \left( \sum_{z \in \cV_h} \frac{1}{d+1} \| \bdev (\bsigma - \bsigma_h^R) \|_{\omega_z}^2 \right)^{1/2}
      \left( \sum_{z \in \cV_h} \frac{1}{d+1} \| \div \: \bu_h - \frac{1}{\lambda} p_h \|_{\omega_z}^2 \right)^{1/2} \\
      & = C_A \| \bdev (\bsigma - \bsigma_h^R) \| \: \| \div \: \bu_h - \frac{1}{\lambda} p_h \|
    \end{split}
  \end{equation}
  and concludes the proof.
\end{proof}

The constants $C_{K,z}$ from (\ref{eq:Korn_average}) corresponds to the second case in \cite{Hor:95} and are known to be not
smaller than 2 (which is the value for a perfect disc). In principle, upper bounds for $C_{K,z}$ can be computed for any vertex
patch using the formulas in \cite[Sect. 5]{Hor:95}. In particular, for a vertex patch $\omega_z$ consisting of six equilateral triangles,
we have $C_{K,z} \leq \sqrt{8}$ from \cite[(5.17)]{Hor:95}.

In the two-dimensional case, the constant $C_{A,z}$ from (\ref{eq:dev_div_average}) is related to $C_{K,z}$ by
\begin{equation}
  C_{A,z} \leq 2 \left( C_{K,z}^2 - 1 \right)^{1/2} \: ,
  \label{eq:relation_A_K}
\end{equation}
which can be seen as follows: Korn's inequality of the type (\ref{eq:Korn_average}) implies that
\begin{equation}
  \| \bepsilon (\bv) \|_{\omega_z}^2 + \inf_{\alpha \in \R} \| \bas \: \bnabla \bv - J^2 (\alpha) \|_{\omega_z}^2
  = \inf_{\alpha \in \R} \| \bnabla \bv - J^2 (\alpha) \|_{\omega_z}^2 \leq C_{K,z}^2 \| \bepsilon (\bv) \|_{\omega_z}^2
  \label{eq:Korn_implication}
\end{equation}
holds for all $\bv \in H^1 (\omega_z)^d$. Due to the special form of $\bas$ and $J^2 (\alpha)$ this may be rewritten as
\begin{equation}
  \frac{1}{2} \inf_{\alpha \in \R} \| \curl \: \bv - 2 \alpha \|_{\omega_z}^2 \leq \left( C_{K,z}^2 - 1 \right) \| \bepsilon (\bv) \|_{\omega_z}^2 \: .
  \label{eq:Korn_implication_2}
\end{equation}
In order to derive the desired inequality (\ref{eq:dev_div_average}) from this, the representation $\btau = \bnabla^\perp \bv$ with
$\bv \in H^1 (\omega_z)$, which holds due to $\div \: \btau = \bzero$, is used. This leads to
\begin{equation}
  \tr \: \btau = \curl \: \bv \mbox{ and } \bdev \: \btau = \bepsilon (\bv)
  \begin{pmatrix} 0 & -1 \\ 1 & \;\;0 \end{pmatrix} \: ,
\end{equation}
which implies that (\ref{eq:dev_div_average}) holds with $C_{A,z} = 2 \left( C_{K,z}^2 - 1 \right)^{1/2}$.

\section{A posteriori error estimation: Incompressible case}

\label{sec-error_estimation_incompressible}

In this section, our a posteriori error estimator based on the stress equilibration $\bsigma_h^\Delta$ is derived under
simplifying assumptions that make the analysis less complicated and clarifies the main ideas. To this end, we restrict
ouselves to the incompressible limit where $\lambda$ is set to infinity. Moreover, we assume that $\bff$ is piecewise
polynomial of degree $k$ with respect to $\cT_h$ and that $\bg$ is piecewise polynomial of degree $k$ with respect to
$\cS_h \cap \Gamma_N$ (implying that $\bff = \cP_h^k \bff$ and $\bg = \cP_{h,\Gamma}^k \bg$). The justification of
this assumption will be postponed to the next section. After that, Section \ref{sec-error_estimation_general} contains
the more technical analysis for arbitrary Lam\'e parameter $\lambda$.

Our aim is to estimate the displacement error with respect to $\| \bepsilon ( \: \cdot \: ) \|$ which constitutes a norm on
$H_{\Gamma_D}^1 (\Omega)^d$ due to Korn's inequality. The definition of the stress leads directly to
\begin{equation}
  \tr \: \bsigma = 2 \mu \: \div \: \bu + d \: p = d \: p \: , \:
  \tr \: \bsigma_h (\bu_h,p_h) = 2 \mu \: \div \: \bu_h + d \: p_h \: ,
\end{equation}
which implies
\begin{equation}
  \bepsilon (\bu) = \frac{1}{2 \mu} \left( \bsigma - p \bI \right)
  = \frac{1}{2 \mu} \left( \bsigma - \frac{1}{d} (\tr \: \bsigma) \bI \right)
  =: \cA_\infty \bsigma
  \label{eq:strain_stress_relation_inf}
\end{equation}
and
\begin{equation}
  \begin{split}
    \bepsilon (\bu_h) & = \frac{1}{2 \mu} \left( \bsigma_h - p_h \bI \right) \\
    & = \frac{1}{2 \mu} \left( \bsigma_h - \frac{1}{d} (\tr \: \bsigma_h) \bI \right) + \frac{1}{d} (\div \: \bu_h) \: \bI
    = \cA_\infty \bsigma_h + \frac{1}{d} \: (\div \: \bu_h) \: \bI \: .
  \end{split}
  \label{eq:strain_stress_relation_discrete_inf}
\end{equation}
Inserting the relation $\bsigma = 2 \mu \bepsilon (\bu) + p \bI$ which holds for the exact solution, we obtain
\begin{equation}
  \begin{split}
    \| \bsigma_h^\Delta \|_{\cA_\infty}^2 & = \| \bsigma_h^R - \bsigma_h (\bu_h,p_h) \|_{\cA_\infty}^2
    = \| \bsigma - \bsigma_h^R - 2 \mu \bepsilon (\bu - \bu_h) - (p - p_h) \bI \|_{\cA_\infty}^2 \\
    & = \| \bsigma - \bsigma_h^R \|_{\cA_\infty}^2 + \| 2 \mu \bepsilon (\bu - \bu_h) + (p - p_h) \bI  \|_{\cA_\infty}^2 \\
    & \;\;\; - 2 ( \bsigma - \bsigma_h^R , 2 \mu \bepsilon (\bu - \bu_h) + (p - p_h) \bI )_{\cA_\infty} \\
    & = \frac{1}{2 \mu} \| \bdev (\bsigma - \bsigma_h^R) \|^2
    + ( 2 \mu \bepsilon (\bu - \bu_h) + (p - p_h) \bI - 2 ( \bsigma - \bsigma_h^R ) , \\
    & \hspace{5.5cm} \cA_\infty (2 \mu \bepsilon (\bu - \bu_h) + (p - p_h) \bI) ) \: .
  \end{split}
  \label{eq:estimator_conforming_first}
\end{equation}
The right term in the last inner product can be rewritten as
\begin{equation}
  \begin{split}
    \cA_\infty ( 2 \mu \bepsilon (\bu - \bu_h) + (p - p_h) \bI )
    = \bepsilon (\bu - \bu_h) + \frac{1}{d} ( \div \: \bu_h ) \bI \: .
  \end{split}
\end{equation}
Inserting this into (\ref{eq:estimator_conforming_first}) leads to
\begin{equation}
  \begin{split}
    \| \bsigma_h^\Delta  \|_{\cA_\infty}^2
    & = \frac{1}{2 \mu} \| \bdev (\bsigma - \bsigma_h^R) \|^2 + 2 \mu \| \bepsilon (\bu - \bu_h) \|^2
    - \frac{2 \mu}{d} \| \div \: \bu_h \|^2 \\
    & - 2 ( \bsigma - \bsigma_h^R , \bepsilon (\bu - \bu_h) )
    - \frac{2}{d} ( \tr (\bsigma - \bsigma_h^R) , \div \: \bu_h ) \: .
  \end{split}
  \label{eq:estimator_conforming_second}
\end{equation}
The two last terms on the right-hand side of (\ref{eq:estimator_conforming_second}) can be treated as
\begin{equation}
  \begin{split}
    2 ( \bsigma - \bsigma_h^R , \bepsilon (\bu - \bu_h) )
    & = 2 ( \bsigma - \bsigma_h^R , \bnabla (\bu - \bu_h) ) - 2 ( \bsigma - \bsigma_h^R , \bas \: \bnabla (\bu - \bu_h) ) \\
    & = - 2 ( \div (\bsigma - \bsigma_h^R) , \bu - \bu_h ) + 2 ( \bas \: \bsigma_h^R , \bas \: \bnabla (\bu - \bu_h) ) \\
    & = 2 ( \bas \: \bsigma_h^R , \bnabla (\bu - \bu_h) ) \leq 2 C_K \| \bas \: \bsigma_h^R \| \: \| \bepsilon (\bu - \bu_h ) \| \\
    & \leq \frac{C_K^2}{\delta} \| \bas \: \bsigma_h^R \|^2 + \delta \| \bepsilon (\bu - \bu_h ) \|^2 \: ,
  \end{split}
  \label{eq:second_to_last_term_estimate}
\end{equation}
where the first estimate in Lemma \ref{lemma-bounds_asym_volum} is used (with $C_K$ depending only on the shape-regularity
of the triangulation) and $\delta > 0$ can be chosen arbitrarily.
The second estimate in Lemma \ref{lemma-bounds_asym_volum} leads to
\begin{equation}
  \begin{split}
    \frac{2}{d} ( \tr (\bsigma - \bsigma_h^R) , \div \: \bu_h )
    & \leq \frac{2}{d} C_A \| \bdev (\bsigma - \bsigma_h^R) \| \: \| \div \: \bu_h \| \\
    & \leq \frac{1}{2 \mu} \| \bdev (\bsigma - \bsigma_h^R) \|^2 + 2 \mu \left( \frac{C_A}{d} \right)^2 \| \div \: \bu_h \|^2 \: ,
  \end{split}
  \label{eq:last_term_estimate}
\end{equation}
where the constant $C_A$ again only depends on the shape-regularity of the triangulation. 

Combining (\ref{eq:estimator_conforming_second}) with (\ref{eq:second_to_last_term_estimate}) and (\ref{eq:last_term_estimate})
and using the fact that $\bas \: \bsigma_h^R = \bas \: \bsigma_h^\Delta$ leads to
\begin{equation}
  (2 \mu - \delta) \| \bepsilon (\bu - \bu_h) \|^2 \leq \| \bsigma_h^\Delta \|_{\cA_\infty}^2
  + 2 \mu \left( \frac{1}{d} + \left( \frac{C_A}{d} \right)^2 \right) \| \div \: \bu_h \|^2
  + \frac{C_K^2}{\delta} \| \bas \: \bsigma_h^\Delta \|^2 \: .
\end{equation}
Setting $\delta = \mu$, and noting that $2 \mu \| \bsigma_h^\Delta \|_{\cA_\infty}^2 = \| \bdev \: \bsigma_h^\Delta \|^2$ holds, we
finally obtain
\begin{equation}
  2 \mu \| \bepsilon (\bu - \bu_h) \|^2 \leq \frac{1}{\mu} \| \bdev \: \bsigma_h^\Delta \|^2
  + 4 \mu \left( \frac{1}{d} + \left( \frac{C_A}{d} \right)^2 \right) \| \div \: \bu_h \|^2
  + 2 \frac{C_K^2}{\mu} \| \bas \: \bsigma_h^\Delta \|^2 \: .
  \label{eq:guaranteed_upper_bound_incompressible}
\end{equation}
In the incompressible limit, our error estimator therefore consists element-wise of the three parts
\begin{equation}
  \eta_{A,T} = \frac{1}{(2 \mu)^{\frac{1}{2}}}\| \bdev \: \bsigma_h^\Delta \|_{T} \: , \:
  \eta_{B,T} = (2 \mu)^{\frac{1}{2}} \| \div \: \bu_h \|_{T} \: , \:
  \eta_{C,T} = \frac{1}{(2 \mu)^{\frac{1}{2}}} \| \bas \: \bsigma_h^\Delta \|_{T} \: .
\end{equation}
Together these provide a guaranteed upper bound for the energy norm of the error of the form
\begin{equation}
  2 \mu \| \bepsilon (\bu - \bu_h) \|^2 \leq 2 \sum_{T \in \cT_h} \eta_{A,T}^2
  + 2 \left( \frac{1}{d} + \left( \frac{C_A}{d} \right)^2 \right) \sum_{T \in \cT_h} \eta_{B,T}^2
  + 4 C_K^2 \sum_{T \in \cT_h} \eta_{C,T}^2
  \label{eq:guaranteed_upper_bound_incompressible_final}
\end{equation}
involving the controllable constants $C_A$ and $C_K$.

\section{Effect of the data approximation}

\label{sec-data_approximation}

In Section \ref{sec-error_estimation_general}, our a posteriori error estimator will be analyzed for the general case of arbitrary
Lam\'e parameter $\lambda$. The error will be estimated in the energy norm, expressed in terms of $\bu - \bu_h$ and $p - p_h$,
given by
\begin{equation}
  ||| ( \bu - \bu_h , p - p_h ) ||| = \left( 2 \mu \| \bepsilon (\bu - \bu_h) \|^2 + \frac{1}{\lambda} \| p - p_h \|^2 \right)^{1/2} \: .
  \label{eq:energy_norm}
\end{equation}
This section provides an investigation of the effect of the approximation of the right-hand side terms $\bff$ and $\bg$
on the solution $( \bu , p )$ of (\ref{eq:disp_pressure}). To this end, denote by $( \widetilde{\bu} , \tilde{p} )$ the solution of
(\ref{eq:disp_pressure}) with $\bff$ and $\bg$ replaced by $\cP_h^k \bff$ and $\cP_{h,\Gamma}^k \bg$, respectively. Then,
the difference $( \bu - \widetilde{\bu} , p - \tilde{p} )$ satisfies
\begin{equation}
  \begin{split}
    2 \mu ( \bepsilon (\bu - \widetilde{\bu}) , \bepsilon (\bv) ) + ( p - \tilde{p} , \div \: \bv ) 
    & = ( \bff - \cP_h^k \bff , \bv ) + \langle \bg - \cP_{h,\Gamma}^k \bg , \bv \rangle_{L^2 (\Gamma_N)} \: ,\\
    ( \div (\bu - \widetilde{\bu}) , q ) - \frac{1}{\lambda} ( p - \tilde{p} , q ) & = 0
  \end{split}
  \label{eq:disp_pressure_difference}
\end{equation}
for all $\bv \in H_{\Gamma_D}^1 (\Omega)^d$ and $q \in L^2 (\Omega)^d$. From the inf-sup stability, we deduce that
\begin{equation}
  ||| ( \bu - \tilde{\bu} , p - \tilde{p} ) ||| \lesssim
  \sup_{\bv \in H_{\Gamma_D}^1 (\Omega)^d} \frac{( \bff - \cP_h^k \bff , \bv )}{\| \bv \|_{H^1 (\Omega)}}
  + \sup_{\bv \in H_{\Gamma_D}^1 (\Omega)^d}
  \frac{\langle \bg - \cP_{h,\Gamma}^k \bg , \bv \rangle_{L^2 (\Gamma_N)}}{\| \bv \|_{H^1 (\Omega)}}
  \label{eq:difference_estimate}
\end{equation}
holds (cf. \cite[Theorem 4.2.3]{BofBreFor:13}), where $\lesssim$ denotes that the inequality holds up to a constant which
is independent of $\lambda$ (and, in the sequel, also of the local mesh-size $h_T$). Standard approximation estimates imply,
locally for each $T \in \cT_h$,
\begin{equation}
  \begin{split}
    ( \bff - \cP_h^k \bff , \bv )_T & = ( \bff - \cP_h^k \bff , \bv - \cP_h^k \bv )_T \\
    & \leq \| \bff - \cP_h^k \bff \|_T \| \bv - \cP_h^k \bv \|_T \lesssim h_T \| \bff - \cP_h^k \bff \|_T \| \bv \|_{H^1 (T)} \: .
  \end{split}
  \label{eq:approximation_f_local}
\end{equation}
Summing over all elements, this leads to
\begin{equation}
  \begin{split}
    ( \bff - \cP_h^k \bff , \bv ) & \lesssim \sum_{T \in \cT_h} h_T \| \bff - \cP_h^k \bff \|_T \| \bv \|_{H^1 (T)} \\
    & \leq \left( \sum_{T \in \cT_h} h_T^2 \| \bff - \cP_h^k \bff \|_T^2 \right)^{1/2} \| \bv \|_{H^1 (\Omega)} \: .
  \end{split}
  \label{eq:approximation_f_global}
\end{equation}
Similarly, for each $S \in \cS_h$ with $S \subseteq \Gamma_N$, we have
\begin{equation}
  \begin{split}
    \langle \bg - \cP_{h,\Gamma}^k \bg , \bv \rangle_S
    & = \langle \bg - \cP_{h,\Gamma}^k \bg , \bv - \cP_{h,\Gamma}^k \bv \rangle_S \\
    & \leq \| \bg - \cP_{h,\Gamma}^k \bg \|_S \| \bv - \cP_{h,\Gamma}^k \bv \|_S
    \lesssim h_S^{1/2} \| \bg - \cP_{h,\Gamma}^k \bg \|_S \| \bv \|_{H^{1/2} (S)} \: .
  \end{split}
  \label{eq:approximation_g_local}
\end{equation}
Summing over all sides in $\Gamma_N$, we obtain
\begin{equation}
  \begin{split}
    \langle \bg - \cP_{h,\Gamma}^k \bg , \bv \rangle_{\Gamma_N}
    & \lesssim \sum_{S \subseteq \Gamma_N} h_S^{1/2} \| \bg - \cP_{h,\Gamma}^k \bg \|_S \| \bv \|_{H^{1/2} (S)} \\
    & \leq \left( \sum_{S \subseteq \Gamma_N} h_S \| \bg - \cP_{h,\Gamma}^k \bg \|_S^2 \right)^{1/2} \| \bv \|_{H^{1/2} (\Gamma_N)}
    \\
    & \lesssim \left( \sum_{S \subseteq \Gamma_N} h_S \| \bg - \cP_{h,\Gamma}^k \bg \|_S^2 \right)^{1/2} \| \bv \|_{H^1 (\Omega)}
    \: ,
  \end{split}
  \label{eq:approximation_g_global}
\end{equation}
where the standard trace theorem from $H^1 (\Omega)^d$ to $H^{1/2} (\Gamma_N)^d$ is used. Finally,
inserting (\ref{eq:approximation_f_global}) and (\ref{eq:approximation_g_global}) into (\ref{eq:difference_estimate}) gives
\begin{equation}
  ||| ( \bu - \widetilde{\bu} , p - \tilde{p} ) ||| \lesssim
  \left( \sum_{T \in \cT_h} h_T^2 \| \bff - \cP_h^k \bff \|_T^2
  + \sum_{S \subseteq \Gamma_N} h_S \| \bg - \cP_{h,\Gamma}^k \bg \|_S^2 \right)^{1/2} \: .
  \label{eq:difference_estimate_final}
\end{equation}

We compare the convergence order of the local terms in the right-hand side in (\ref{eq:difference_estimate_final}) to the best
possible one for the local error $\| \bepsilon (\bu - \bu_h) \|_T$ of the approximation computed from
(\ref{eq:disp_pressure_discrete}). Assuming that $\bff \in H^\alpha (T)^d$ for some $\alpha \in ( 0 , k+1 )$, then we have
$\| \bff - \cP_h^k \bff \|_T \lesssim h_T^\alpha$, while the approximation error does, in general, behave like
$\| \bepsilon ( \bu - \bu_h ) \|_T = O (h_T^{1 + \alpha})$ at best. Note that $\bu$ can locally not be more than
$H^{2 + \alpha}$-regular, in general. Similarly, if we assume that $\bg \in H^\beta (S)^d$ for some $\beta \in ( 0 , k+1 )$, then
we have $\| \bg - \cP_{h,\Gamma}^k \bg \|_S \lesssim h_S^\beta$. The regularity of $\bu$, however, is locally not better than
$H^{3/2 + \beta}$, in general, leading to a convergence behavior not better than
$\| \bepsilon ( \bu - \bu_h ) \|_T = O (h_S^{1/2 + \beta})$ on elements adjacent to $S$. In any case, we get that
$||| ( \bu - \widetilde{\bu} , p - \tilde{p} ) ||| \lesssim ||| ( \bu - \bu_h , p - p_h ) |||$ independently of the triangulation. This is
completely similar to the situation for the Poisson equation treated in \cite[Theorem 4]{BraSch:08}.
We may therefore perform our analysis under the assumption that $\bff = \cP_h^k \bff$ and
$\bg = \cP_h^k \bg$ is fulfilled.

\section{A posteriori error estimation: The general case}

\label{sec-error_estimation_general}

We are now ready for the analysis of our error estimator in the general case. The definition of the stress directly leads to
\begin{equation}
  \begin{split}
    \tr \: \bsigma & = 2 \mu \div \: \bu + d p = \left( \frac{2 \mu}{\lambda} + d \right) p \: , \\
    \tr \: \bsigma_h & = 2 \mu \div \: \bu_h + d p_h = \left( \frac{2 \mu}{\lambda} + d \right) p_h
    + 2 \mu \left( \div \: \bu_h - \frac{1}{\lambda} p_h \right) \: ,
  \end{split}
\end{equation}
which implies
\begin{equation}
  \bepsilon (\bu) = \frac{1}{2 \mu} \left( \bsigma - p \bI \right)
  = \frac{1}{2 \mu} \left( \bsigma - \frac{\lambda}{2 \mu + d \lambda} (\tr \: \bsigma) \bI \right)
  =: \cA \bsigma
  \label{eq:strain_stress_relation}
\end{equation}
and
\begin{equation}
  \begin{split}
    \bepsilon (\bu_h) & = \frac{1}{2 \mu} \left( \bsigma_h - p_h \bI \right) \\
    & = \frac{1}{2 \mu} \left( \bsigma_h - \frac{\lambda}{2 \mu + d \lambda} (\tr \: \bsigma_h) \bI \right)
    + \frac{\lambda}{2 \mu + d \lambda} \left( \div \: \bu_h - \frac{1}{\lambda} p_h \right) \bI \\
    & = \cA \bsigma_h + \frac{\lambda}{2 \mu + d \lambda} \left( \div \: \bu_h - \frac{1}{\lambda} p_h \right) \bI \: .
  \end{split}
  \label{eq:strain_stress_relation_discrete}
\end{equation}
Note that (\ref{eq:strain_stress_relation}) and (\ref{eq:strain_stress_relation_discrete}) remain valid in the
incompressibe limit $\lambda \rightarrow \infty$, where $\cA$ tends to $\cA_\infty$ which was studied earlier in Section
\ref{sec-error_estimation_incompressible}.


Our a posteriori error estimator will be based on $\| \bsigma_h^\Delta \|_{\cA}^2$, the stress equilibration correction measured
with respect to the $\cA$-norm given by $\| \: \cdot \: \|_\cA := ( \: \cA (\cdot) \: , \: \cdot \: )^{1/2}$. Inserting the exact solution, we
obtain in analogy to (\ref{eq:estimator_conforming_first}) that
\begin{equation}
  \begin{split}
    \| \bsigma_h^\Delta \|_{\cA}^2 & = \| \bsigma_h^R - \bsigma_h (\bu_h , p_h)  \|_{\cA}^2
    = \| \bsigma - \bsigma_h^R - 2 \mu \bepsilon (\bu - \bu_h) - (p - p_h) \bI  \|_{\cA}^2 \\
    & = \| \bsigma - \bsigma_h^R \|_\cA^2 \\
    & + ( 2 \mu \bepsilon (\bu - \bu_h) + (p - p_h) \bI - 2 (\bsigma - \bsigma_h^R) ,
    \cA ( 2 \mu \bepsilon (\bu - \bu_h) + (p - p_h) \bI ) )
  \end{split}
  \label{eq:estimator_first}
\end{equation}
holds. The right term in the last inner product can be rewritten as
\begin{equation}
    \cA ( 2 \mu \bepsilon (\bu - \bu_h) + (p - p_h) \bI )
    = \bepsilon (\bu - \bu_h) + \frac{\lambda}{2 \mu + d \lambda} \left( \div \: \bu_h - \frac{p_h}{\lambda} \right) \bI \: .
\end{equation}
Inserting this into (\ref{eq:estimator_first}) leads to
\begin{equation}
  \begin{split}
    \| & \bsigma_h^\Delta \|_{\cA}^2 = \| \bsigma - \bsigma_h^R \|_\cA^2 + 2 \mu \| \bepsilon (\bu - \bu_h) \|^2
    + \frac{2 \mu \lambda}{2 \mu + d \lambda} \left( \frac{p}{\lambda} - \div \, \bu_h , \div \, \bu_h - \frac{p_h}{\lambda} \right) \\
    & + \left( p - p_h , \frac{p}{\lambda} - \div \: \bu_h \right)
    + \frac{d \lambda}{2 \mu + d \lambda} \left( p - p_h , \div \: \bu_h - \frac{p_h}{\lambda} \right) \\
    & - 2 ( \bsigma - \bsigma_h^R , \bepsilon (\bu - \bu_h) )
    - \frac{2 \lambda}{2 \mu + d \lambda} \left( \tr (\bsigma - \bsigma_h^R) , \div \: \bu_h - \frac{p_h}{\lambda} \right) \\
    & = \| \bsigma - \bsigma_h^R \|_\cA^2 + 2 \mu \| \bepsilon (\bu - \bu_h) \|^2 + \frac{1}{\lambda} \| p - p_h \|^2
    - \frac{2 \mu \lambda}{2 \mu + d \lambda} \left\| \div \: \bu_h - \frac{p_h}{\lambda} \right\|^2 \\
    & - 2 ( \bsigma - \bsigma_h^R , \bepsilon (\bu - \bu_h) )
    - \frac{2 \lambda}{2 \mu + d \lambda} \left( \tr (\bsigma - \bsigma_h^R) , \div \: \bu_h - \frac{p_h}{\lambda} \right) \\
    & = \| \bsigma - \bsigma_h^R \|_\cA^2 + ||| (\bu - \bu_h , p - p_h) |||^2
    - \frac{2 \mu \lambda}{2 \mu + d \lambda} \left\| \div \: \bu_h - \frac{p_h}{\lambda} \right\|^2 \\
    & - 2 ( \bsigma - \bsigma_h^R , \bepsilon (\bu - \bu_h) )
    - \frac{2 \lambda}{2 \mu + d \lambda} \left( \tr (\bsigma - \bsigma_h^R) , \div \: \bu_h - \frac{p_h}{\lambda} \right) \: ,
  \end{split}
  \label{eq:estimator_second}
\end{equation}
where we replaced $\div \: \bu$ by $p / \lambda$, wherever it occurred. From (\ref{eq:second_to_last_term_estimate}), we obtain
\begin{equation}
  2 ( \bsigma - \bsigma_h^R , \bepsilon (\bu - \bu_h) ) \leq \frac{C_K^2}{\delta} \| \bas \: \bsigma_h^R \|^2
  + \frac{\delta}{2 \mu} ||| ( \bu - \bu_h , p - p_h ) |||^2 \: ,
  \label{eq:second_to_last_term_estimate_general}
\end{equation}
which may be used to bound the second-to-last term in (\ref{eq:estimator_second}). For the last term in
(\ref{eq:estimator_second}), we deduce from (\ref{eq:bound_volum}) in Lemma \ref{lemma-bounds_asym_volum} and from
\begin{equation}
  \begin{split}
    \| \bsigma - \bsigma_h^R \|_{\cA}^2 & = ( \cA (\bsigma - \bsigma_h^R) , \bsigma - \bsigma_h^R ) \\
    & = \frac{1}{2 \mu}
    \left( \| \bsigma - \bsigma_h^R \|^2 - \frac{\lambda}{2 \mu + d \lambda} \| \tr (\bsigma - \bsigma_h^R) \|^2 \right) \\
    & \geq \frac{1}{2 \mu} \left( \| \bsigma - \bsigma_h^R \|^2 - \frac{1}{d} \| \tr (\bsigma - \bsigma_h^R) \|^2 \right)
    = \frac{1}{2 \mu} \| \bdev (\bsigma - \bsigma_h^R) \|^2
  \end{split}
  \label{eq:bound_A_dev}
\end{equation}
that
\begin{equation}
  \begin{split}
    \frac{2 \lambda}{2 \mu + d \lambda} & \left( \tr (\bsigma - \bsigma_h^R) , \div \: \bu_h - \frac{p_h}{\lambda} \right) \\
    & \leq \frac{2 \lambda C_A}{2 \mu + d \lambda} \| \bdev (\bsigma - \bsigma_h^R) \| \: \| \div \: \bu_h - \frac{1}{\lambda} p_h \| \\
    & \leq \frac{1}{2 \mu} \| \bdev (\bsigma - \bsigma_h^R) \|^2
    + \frac{2 \mu \lambda^2 C_A^2}{(2 \mu + d \lambda)^2} \| \div \: \bu_h - \frac{1}{\lambda} p_h \|^2 \\
    & \leq \| \bsigma - \bsigma_h^R \|_{\cA}^2
    + 2 \mu \left( \frac{\lambda C_A}{2 \mu + d \lambda} \right)^2 \| \div \: \bu_h - \frac{1}{\lambda} p_h \|^2
  \end{split}
  \label{eq:last_term_estimate_general}
\end{equation}
holds. Inserting (\ref{eq:second_to_last_term_estimate_general}) and (\ref{eq:last_term_estimate_general}) into
(\ref{eq:estimator_second}) and using the fact that $\bas \: \bsigma_h^R = \bas \: \bsigma_h^\Delta$ leads to
\begin{equation}
  \begin{split}
    \left( 1 - \frac{\delta}{2 \mu} \right) & ||| ( \bu - \bu_h , p - p_h ) |||^2 \\
    \leq \| \bsigma_h^\Delta \|_\cA^2
    & + \frac{2 \mu \lambda^2}{(2 \mu + d \lambda)^2} \left( \frac{2 \mu}{\lambda} + d + C_A^2 \right)
    \| \div \: \bu_h - \frac{p_h}{\lambda} \|^2 + \frac{C_K^2}{\delta} \| \bas \: \bsigma_h^\Delta \|^2 \: ,
  \end{split}
\end{equation}
where $\delta \in ( 0 , 1 )$ is still arbitrary. Setting again $\delta = \mu$, we finally obtain
\begin{equation}
  \begin{split}
    & ||| ( \bu - \bu_h , p - p_h ) |||^2 \\
    & \leq 2 \| \bsigma_h^\Delta \|_\cA^2
    + \frac{4 \mu \lambda^2}{(2 \mu + d \lambda)^2} \left( \frac{2 \mu}{\lambda} + d + C_A^2 \right)
    \| \div \: \bu_h - \frac{p_h}{\lambda} \|^2 + 2 \frac{C_K^2}{\mu} \| \bas \: \bsigma_h^\Delta \|^2 \: ,
  \end{split}
  \label{eq:guaranteed_upper_bound_general}
\end{equation}
Our error estimator therefore consists element-wise of the three parts
\begin{equation}
  \eta_{A,T} = \| \bsigma_h^\Delta \|_{\cA,T} \: , \: \eta_{B,T} = (2 \mu)^{1/2} \| \div \: \bu_h - \frac{p_h}{\lambda} \|_{T} \: , \:
  \eta_{C,T} = \frac{1}{(2 \mu)^{1/2}} \| \bas \: \bsigma_h^\Delta \|_{T} \: ,
  \label{eq:estimator_general}
\end{equation}
which together provide a guaranteed upper bound for the energy norm of the error.

We summarize the result of this derivation as follows.

\begin{theorem}
  Let $( \bu,p) \in H_{\Gamma_D}^1 (\Omega)^d \times L^2 (\Omega)$ be the exact solution of (\ref{eq:disp_pressure}) and
  $(\bu_h,p_h) \in \bV_h \times Q_h$ its finite element approximation satisfying (\ref{eq:disp_pressure_discrete}). Then,
  \begin{equation}
    \begin{split}
      ||| & ( \bu - \bu_h , p - p_h ) |||^2 \\
      & \leq 2 \sum_{T \in \cT_h} \eta_{A,T}^2
      + \frac{2 \lambda^2}{(2 \mu + d \lambda)^2} \left( \frac{2 \mu}{\lambda} + d + C_A^2 \right) \sum_{T \in \cT_h} \eta_{B,T}^2
      + 4 C_K^2 \sum_{T \in \cT_h} \eta_{C,T}^2 \: ,
    \end{split}
    \label{eq:guaranteed_upper_bound_general_final}
  \end{equation}
  involving the controllable constants $C_A$ and $C_K$ which only depend on the shape regularity of the triangulation.
\end{theorem}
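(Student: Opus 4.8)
The plan is to assemble the algebraic identity and the two auxiliary estimates that have just been prepared. First I would expand $\| \bsigma_h^\Delta \|_{\cA}^2$: writing $\bsigma_h^\Delta = \bsigma_h^R - \bsigma_h(\bu_h,p_h)$ and inserting the exact relation $\bsigma = 2 \mu \bepsilon (\bu) + p \bI$, then expanding the $\cA$-norm and substituting the strain--stress identities (\ref{eq:strain_stress_relation}) and (\ref{eq:strain_stress_relation_discrete}) together with $\div \: \bu = p/\lambda$, gives (\ref{eq:estimator_second}). This expresses $\| \bsigma_h^\Delta \|_{\cA}^2$ as $\| \bsigma - \bsigma_h^R \|_\cA^2 + ||| ( \bu - \bu_h , p - p_h ) |||^2$ minus the nonnegative volumetric term $\tfrac{2 \mu \lambda}{2 \mu + d \lambda}\| \div \: \bu_h - p_h/\lambda \|^2$ and minus the two cross terms $2 ( \bsigma - \bsigma_h^R , \bepsilon (\bu - \bu_h) )$ and $\tfrac{2 \lambda}{2 \mu + d \lambda} ( \tr (\bsigma - \bsigma_h^R) , \div \: \bu_h - p_h/\lambda )$; solving for $||| ( \bu - \bu_h , p - p_h ) |||^2$ leaves these two cross terms and the volumetric term to be bounded from above, while $\| \bsigma - \bsigma_h^R \|_\cA^2$ now appears with a favourable sign.

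Next I would bound the first cross term exactly as in (\ref{eq:second_to_last_term_estimate}): the weak-symmetry hypothesis $( \bsigma_h^R , \bJ^d (\bgamma_h) ) = 0$ and the identity $\div (\bsigma - \bsigma_h^R) = \bzero$ — which holds under the standing assumption $\bff = \cP_h^k \bff$, $\bg = \cP_{h,\Gamma}^k \bg$ justified in Section \ref{sec-data_approximation} — allow integration by parts to reduce it to $2 ( \bas \: \bsigma_h^R , \bnabla (\bu - \bu_h) )$, after which the first inequality of Lemma \ref{lemma-bounds_asym_volum} and Young's inequality give (\ref{eq:second_to_last_term_estimate_general}) with a free parameter $\delta > 0$. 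For the second cross term I would use the second inequality of Lemma \ref{lemma-bounds_asym_volum}, Young's inequality, and the lower bound $\| \bsigma - \bsigma_h^R \|_{\cA}^2 \geq \tfrac{1}{2 \mu} \| \bdev (\bsigma - \bsigma_h^R) \|^2$ from (\ref{eq:bound_A_dev}), which lets the Young splitting reproduce the full quantity $\| \bsigma - \bsigma_h^R \|_{\cA}^2$ on the right-hand side, see (\ref{eq:last_term_estimate_general}); the leftover $\| \div \: \bu_h - p_h/\lambda \|^2$ contribution then combines with the volumetric term of (\ref{eq:estimator_second}) into the single coefficient $\tfrac{2 \mu \lambda^2}{(2 \mu + d \lambda)^2}\bigl(\tfrac{2 \mu}{\lambda} + d + C_A^2\bigr)$.

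Substituting both bounds into (\ref{eq:estimator_second}), the two occurrences of $\| \bsigma - \bsigma_h^R \|_\cA^2$ cancel, and using $\bas \: \bsigma_h^R = \bas \: \bsigma_h^\Delta$ (valid because $\bsigma_h(\bu_h,p_h)$ is symmetric) leaves $\bigl( 1 - \tfrac{\delta}{2 \mu} \bigr) ||| ( \bu - \bu_h , p - p_h ) |||^2$ bounded by $\| \bsigma_h^\Delta \|_\cA^2$ plus the combined volumetric coefficient times $\| \div \: \bu_h - p_h/\lambda \|^2$ plus $\tfrac{C_K^2}{\delta} \| \bas \: \bsigma_h^\Delta \|^2$; taking $\delta = \mu$ and multiplying by $2$ gives (\ref{eq:guaranteed_upper_bound_general}), and decomposing each global $L^2$-norm into its element contributions according to the definitions (\ref{eq:estimator_general}) of $\eta_{A,T}$, $\eta_{B,T}$, $\eta_{C,T}$ yields the asserted estimate. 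I expect the only delicate point to be the bookkeeping of the $\lambda$-dependent factors, so that the coefficient of $\sum_T \eta_{B,T}^2$ stays bounded as $\lambda \to \infty$ — which rests on $\lambda/(2 \mu + d \lambda) \leq 1/d$ used in (\ref{eq:bound_A_dev}) — and ensuring that the non-computable quantity $\| \bsigma - \bsigma_h^R \|_\cA^2$ is absorbed exactly rather than merely estimated.
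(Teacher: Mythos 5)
Your proposal is correct and follows essentially the same route as the paper: expand $\| \bsigma_h^\Delta \|_{\cA}^2$ to get (\ref{eq:estimator_second}), bound the two cross terms via Lemma \ref{lemma-bounds_asym_volum} together with (\ref{eq:bound_A_dev}) so that $\| \bsigma - \bsigma_h^R \|_\cA^2$ is absorbed exactly, use $\bas \: \bsigma_h^R = \bas \: \bsigma_h^\Delta$, set $\delta = \mu$, and localize via (\ref{eq:estimator_general}). No gaps worth noting.
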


Note that the term in front of the estimator contributions $\eta_{B,h}$ is monotonically increasing in $\lambda$ and therefore
bounded by its limit for $\lambda \rightarrow \infty$. Thus, (\ref{eq:guaranteed_upper_bound_general_final}) implies that
\begin{equation}
    ||| ( \bu - \bu_h , p - p_h ) |||^2 \leq 2 \sum_{T \in \cT_h} \eta_{A,T}^2
    + 2 \left( \frac{1}{d} + \frac{C_A^2}{d^2} \right) \sum_{T \in \cT_h} \eta_{B,T}^2
    + 4 C_K^2 \sum_{T \in \cT_h} \eta_{C,T}^2
  \label{eq:guaranteed_upper_bound_general_final_independent}
\end{equation}
holds which is independent of $\lambda$.

\section{Upper bound by a residual a posteriori error estimator and local efficiency}

\label{sec-local_efficiency}

Local efficiency of our equibrated error estimator (\ref{eq:estimator_general}) may be shown following the same idea as in
\cite{BraSch:08,BraPilSch:09} by bounding it from above with the residual estimator. To this end, we use the decomposition
(\ref{eq:patch_decomposition}) again and obtain
\begin{equation}
  \| \bsigma_h^\Delta \| \leq \sum_{z \in \cV_h^\ast} \| \bsigma_{h,z}^\Delta \|_{\omega_z^\ast} \: .
  \label{eq:local_estimate_upper_bound}
\end{equation}
The terms in the sum on the right-hand side in (\ref{eq:local_estimate_upper_bound}) can be treated by the following result.

\begin{proposition}
  Let $h_z$ denote the average diameter of all elements in $\omega_z^\ast$ and $h_S$ the diameter of the side $S$. Then,
  $\bsigma_{h,z}^\Delta \in \bSigma_{h,z}^\Delta$ minimizing $\| \bsigma_{h,z}^\Delta \|_{\omega_z^\ast}^2$ subject to
  (\ref{eq:equilibration_conditions_local}) satisfies
  \begin{equation}
    \| \bsigma_{h,z}^\Delta \|_{\omega_z^\ast} \lesssim h_z \| \bff + \div \: \bsigma_h ( \bu_h,p_h ) \|_{\omega_z^\ast}
    + \sum_{S \in \cS_{h,z}^\ast} h_S^{1/2} \| \llbracket \bsigma_h ( \bu_h,p_h ) \cdot \bn \rrbracket_S^\ast \|_S \: .
    \label{eq:efficiency_bound_patch}
  \end{equation}
  \label{prop-efficiency_bound_patch}
\end{proposition}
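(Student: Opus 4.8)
The statement is a standard local efficiency bound for an equilibrated-stress reconstruction, of exactly the type proved in \cite{BraSch:08,BraPilSch:09,CaiZha:12a}. The central tool is a \emph{discrete local inf-sup (stability) estimate}: because the local space $\bSigma_{h,z}^\Delta$ is rich enough (it is a broken Raviart--Thomas space on the patch with the appropriate vanishing normal traces) and the constraint operator in \eqref{eq:equilibration_conditions_local} has a controllable adjoint kernel by Proposition~\ref{prop-adjoint_null_space_characterization}, the norm of the minimizer $\bsigma_{h,z}^\Delta$ is controlled, up to a constant depending only on shape regularity, by a dual norm of the right-hand side data. Concretely, I would first establish that there exists a constant $C>0$ depending only on the shape regularity of $\cT_h$ and on the polynomial degree $k$ such that
\begin{equation*}
  \| \bsigma_{h,z}^\Delta \|_{\omega_z^\ast}
  \le C \sup_{(\bz_{h,z},\bzeta_S,\bgamma_{h,z})}
  \frac{- ( (\bff + \div\,\bsigma_h)\phi_z^\ast , \bz_{h,z} )_{\omega_z^\ast,h}
        - \sum_{S} \langle \llbracket \bsigma_h \cdot \bn \rrbracket_S^\ast \phi_z^\ast , \bzeta_S \rangle_S}
       {\| (\bz_{h,z},\bzeta_S,\bgamma_{h,z}) \|_\star} \: ,
\end{equation*}
where the $\star$-norm is the natural graph/broken norm on the test triple (with $\bZ_{h,z}$ and $\bX_{h,z}$ in $L^2$, suitably scaled, and $\bzeta_S$ on the skeleton). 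This is obtained by a Fortin-type / scaling argument: on the reference patch the minimal-norm solution operator is bounded because the constraint system is solvable (compatibility is exactly \eqref{eq:compatibility_conditions}, verified in Section~\ref{sec-solvability}) and finite-dimensional; then one transports to the physical patch by the standard Piola map and collects powers of $h_z$. The only subtlety compared to the pure-flux case is the extra weak-symmetry constraint $( \bsigma_{h,z}^\Delta , \bJ^d(\bgamma_{h,z}))_{\omega_z^\ast}=0$, but since there is no data on its right-hand side it does not enter the efficiency estimate, and the fact that the combined constraint operator remains stable is precisely the content of Proposition~\ref{prop-adjoint_null_space_characterization} together with the inf-sup stability of $RT_k^d$ with $\bZ_{h,z}\times\bX_{h,z}$ cited there from \cite{BofBreFor:09}.

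\textbf{Bounding the data terms.} Having the abstract bound, the second step is to estimate the numerator. Using $\|\phi_z^\ast\|_{L^\infty}\le 1$ and Cauchy--Schwarz elementwise,
\begin{equation*}
  \big| ( (\bff + \div\,\bsigma_h)\phi_z^\ast , \bz_{h,z} )_{\omega_z^\ast,h} \big|
  \le \| \bff + \div\,\bsigma_h \|_{\omega_z^\ast} \, \| \bz_{h,z} \|_{\omega_z^\ast} \: ,
\end{equation*}
so this contributes the term $h_z \| \bff + \div\,\bsigma_h(\bu_h,p_h) \|_{\omega_z^\ast}$ once the $h_z$-scaling of the $\star$-norm on $\bZ_{h,z}$ is accounted for. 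For the skeleton term, Cauchy--Schwarz on each side $S$ gives $\langle \llbracket\bsigma_h\cdot\bn\rrbracket_S^\ast \phi_z^\ast , \bzeta_S\rangle_S \le \|\llbracket\bsigma_h\cdot\bn\rrbracket_S^\ast\|_S \|\bzeta_S\|_S$, and the corresponding weight in the $\star$-norm is $h_S^{1/2}$ (the standard trace-scaling of a Raviart--Thomas normal component against a boundary polynomial), producing $\sum_{S\in\cS_{h,z}^\ast} h_S^{1/2} \|\llbracket\bsigma_h(\bu_h,p_h)\cdot\bn\rrbracket_S^\ast\|_S$. Summing the two contributions gives exactly \eqref{eq:efficiency_bound_patch}. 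All constants arising are ratios of norms on a finite family of reference configurations determined by shape regularity, hence uniformly bounded; crucially, \emph{no} Korn or dev-div constant and no global inf-sup constant appears here, since the estimate is purely local and the weak-symmetry constraint is homogeneous.

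\textbf{Main obstacle.} The routine parts are the scaling bookkeeping and the Cauchy--Schwarz steps; the genuinely delicate point is establishing the uniform-in-$h$ stability of the minimal-norm solution operator for the \emph{constrained} local problem, i.e. that adding the jump constraints across interior sides of the patch and the weak-symmetry constraint does not degrade the inf-sup constant. This is where one must invoke, carefully, that $\bSigma_{h,z}^\Delta$ with the full set of constraints in \eqref{eq:equilibration_conditions_local} is equivalent (after the two-substep splitting already described in Section~\ref{sec-local_equilibration}, using $\bSigma_{h,z}^{\Delta,\div}$) to a well-posed saddle-point problem whose stability follows from \cite{BofBreFor:09}; one then only needs that \emph{Step~1} (the pure flux-equilibration substep) is itself stable with the claimed right-hand side dependence --- which is exactly the classical result of \cite{BraSch:08} transplanted to the possibly-Neumann-touching patches $\omega_z^\ast$ --- and that \emph{Step~2}, being a homogeneous-data minimization over a finite-dimensional space, contributes only a bounded multiplicative constant. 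I would present this as: (i) reduce to Step~1 plus a bounded correction; (ii) cite/adapt the patch efficiency estimate of \cite{BraSch:08,BraPilSch:09} for Step~1; (iii) absorb Step~2 into the constant. The Neumann-boundary patches require the modified partition of unity $\phi_z^\ast$ and the $\star$-jump $\llbracket\cdot\rrbracket_S^\ast$, but the mesh assumption from Section~\ref{sec-local_equilibration} (each $\Gamma_N$-vertex connected to an interior edge) guarantees these patches still have $\partial\omega_z^\ast\cap\Gamma_D\neq\emptyset$ or enough degrees of freedom, so the same argument applies verbatim.
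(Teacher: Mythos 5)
Your plan is essentially the paper's own proof: the paper also bounds the minimal-norm solution operator on a reference patch (solvability plus finite dimensionality, with finitely many reference configurations), transports back via the Piola map collecting the powers of $h_z$ and $h_S$, and then applies Cauchy--Schwarz to the data terms using $\|\phi_z^\ast\|_{L^\infty}\le 1$, with the homogeneous weak-symmetry constraint playing no role in the data bound. The extra detour you sketch through the two-substep splitting and the Braess--Sch\"oberl patch estimate is not needed (the paper argues directly with the constrained minimizer), but your primary argument matches the paper's.
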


\begin{proof}
  {\em Step 1.} We first prove that
  \begin{equation}
    \| \bsigma_{h,z}^\Delta \|_{\omega_z^\ast}^2
    \lesssim h_z^{2-d} \left( | ( \div \: \bsigma_{h,z}^\Delta , \bz_{h,z} )_{\omega_z^\ast,h} |^2
    + \sum_{S \in \cS_{h,z}^\ast} | \langle \llbracket \bsigma_{h,z}^\Delta \cdot \bn \rrbracket_S , \bzeta_S \rangle_S |^2 \right)
    \label{eq:constraint_equivalence}
  \end{equation}
  holds for all $\bz_{h,z} \in \bZ_{h,z}$ with $\| \bz_{h,z} \|_{\omega_z^\ast,h}^2 \leq h_z^{2 d}$ and
  $\bzeta_S \in P_k (S)^d$ with $\| \bzeta_S \|_S^2 \leq h_S^{2 (d-1)}$, $S \in \cS_{h,z}^\ast$. To this end, we transform the vertex
  patch $\omega_z^\ast$ by a piecewise affine mapping onto a reference patch $\omega_{\rm ref}$ (e.g., centered at the origin
  and such that all edges attached to $z$ have unit length and all triangular angles at $z$ are equal). Due to the shape regularity
  of our triangulation $\cT_h$, this piecewise affine mapping possesses an inverse which we denote by $\bvarphi_z$.
  
  The space $\bSigma_{h,z}^\Delta$ has its counterpart $\bSigma_{\rm ref}^\Delta$ of functions $\bsigma_{\rm ref}^\Delta$
  defined on $\omega_{\rm ref}$ and connected via the Piola transformation
  \begin{equation}
    \bsigma_{h,z}^\Delta \circ \bvarphi_z = \frac{1}{\det (\bnabla \bvarphi_z)} \bsigma_{\rm ref}^\Delta (\bnabla \bvarphi_z)^T
    \label{eq:Piola_map}
  \end{equation}
  (cf. \cite[Sect. 2.1.3]{BofBreFor:13}). If we also define the test functions $\bz_{\rm ref} = \bz_{h,z} \circ \bvarphi_z$ and
  $\bzeta_{\rm ref} = \bzeta_S \circ \bvarphi_z$ on $\omega_{\rm ref}$, then
  \begin{equation}
    \begin{split}
      ( \div \: \bsigma_{h,z}^\Delta , \bz_{h,z} )_{\omega_z^\ast,h}
      & = ( \div \: \bsigma_{\rm ref}^\Delta , \bz_{\rm ref} )_{\omega_{\rm ref}} \\
      \langle \llbracket \bsigma_{h,z}^\Delta \cdot \bn \rrbracket_S , \bzeta_S \rangle_S
      & = \langle \llbracket \bsigma_{\rm ref}^\Delta \cdot \bn \rrbracket_{\hat{S}} , \bzeta_{\hat{S}} \rangle_{\hat{S}} \: , \:
      S = \bvarphi_z ( \hat{S} )
      \label{eq:Piola_inner}
    \end{split}
  \end{equation}
  holds (cf. \cite[Lemma 2.1.6]{BofBreFor:13}). On the reference patch $\omega_{\rm ref}$, we have
  \begin{equation}
    \| \bsigma_{\rm ref}^\Delta \|_{\omega_{\rm ref}}^2 \lesssim
    | ( \div \: \bsigma_{\rm ref}^\Delta , \bz_{\rm ref} )_{\omega_{\rm ref}} |^2
    + \sum_{\hat{S} \in \cS_{\rm ref}^\ast} | \langle \llbracket \bsigma_{\rm ref}^\Delta \cdot \bn \rrbracket_{\hat{S}} ,
    \bzeta_{\hat{S}} \rangle_{\hat{S}} |^2 \: ,
    \label{eq:equivalence_ref}
  \end{equation}
  since the right-hand side being zero forces the left-hand side to vanish and due to the finite dimension of the spaces involved
  and the fact that there is only a finite number of possible reference patches. The shape regularity implies that
  $| \bnabla \bvarphi_z | \lesssim h_z$ and $\det (\bnabla \bvarphi_z) \gtrsim h_z^d$ holds uniformly on $\omega_{\rm ref}$
  and therefore  
  \begin{equation}
    \| \bsigma_{h,z}^\Delta \|_{\omega_z^\ast,h}^2 \lesssim h_z^{2 - d} \| \bsigma_{\rm ref}^\Delta \|_{\omega_{\rm ref}}^2
    \label{eq:Piola_norm}
  \end{equation}
  follows directly from (\ref{eq:Piola_map}). Thus, (\ref{eq:constraint_equivalence}) follows from (\ref{eq:Piola_inner}),
  (\ref{eq:equivalence_ref}) and (\ref{eq:Piola_norm}).

  {\em Step 2.} Inserting the constraints (\ref{eq:equilibration_conditions_local}) into (\ref{eq:constraint_equivalence}) leads to
  \begin{equation}
    \begin{split}
      \| \bsigma_{h,z}^\Delta \|_{\omega_z^\ast,h}^2
      & \lesssim h_z^{2-d} \left( \left| ( (\bff + \div \: \bsigma_h (\bu_h,p_h)) \phi_z^\ast , \bz_{h,z} )_{\omega_z^\ast,h} \right|^2 \right. \\
      & \hspace{3.5cm} \left. + \sum_{S \in \cS_{h,z}^\ast}
      | \langle \llbracket \bsigma_h ( \bu_h,p_h ) \cdot \bn \rrbracket_S^\ast \phi_z^\ast , \bzeta_S \rangle_S |^2 \right) \: .
    \end{split}
  \end{equation}
  Combining the Cauchy-Schwarz inequality with our scaling of $\bz_{h,z}$ and $\bzeta_S$ implies
  \begin{equation}
    \begin{split}
      \| \bsigma_{h,z}^\Delta \|_{\omega_z^\ast,h}^2  
      & \lesssim h_z^2 \| (\bff + \div \: \bsigma_h (\bu_h,p_h)) \phi_z^\ast \|_{\omega_z^\ast,h}^2 \\
      & \hspace{3.2cm} + \sum_{S \in \cS_{h,z}^\ast} h_S \left( \frac{h_S}{h_z} \right)^{d-2} \| \llbracket \bsigma_h ( \bu_h,p_h ) \cdot \bn \rrbracket_S^\ast \phi_z^\ast \|_S^2 \\
      & \lesssim h_z^2 \| \bff + \div \: \bsigma_h (\bu_h,p_h) \|_{\omega_z^\ast,h}^2
      + \sum_{S \in \cS_{h,z}^\ast} h_S \| \llbracket \bsigma_h ( \bu_h,p_h ) \cdot \bn \rrbracket_S^\ast \|_S^2 \: ,
    \end{split}
    \label{eq:constraint_equivalence_inserted}
  \end{equation}
  where $h_S \lesssim h_z$ due to the shape regularity and the fact that $\phi_z^\ast$ is bounded by one is used.
  Taking the square root of (\ref{eq:constraint_equivalence_inserted}) implies (\ref{eq:efficiency_bound_patch}).
\end{proof}

The fact that
\begin{equation}
  \eta_{A,T}^2 + \eta_{C,T}^2 \lesssim \sum_{z \in T} \| \bsigma_{h,z}^\Delta \|_T^2
  \label{eq:bounds_A_and_C}
\end{equation}
is satisfied, combined with (\ref{eq:efficiency_bound_patch}), implies
\begin{equation}
  \begin{split}
    \eta_{A,T}^2 \: + \: & \eta_{B,T}^2 + \eta_{C,T}^2
    \lesssim \sum_{z \in T} \| \bsigma_{h,z}^\Delta \|_T^2 + \| \div \: \bu_h - \frac{p_h}{\lambda} \|_T^2 \\
    & \lesssim \sum_{T^\prime \subset \omega_T} h_{T^\prime}^2 \| \bff + \div \: \bsigma_h ( \bu_h,p_h ) \|_{T^\prime}^2
    + \sum_{S \in \cS_{h,z}^\prime} h_S \| \llbracket \bsigma_h ( \bu_h,p_h ) \cdot \bn \rrbracket_S^\ast \|_S^2 \\
    & \hspace{2cm} + \| \div \: \bu_h - \frac{p_h}{\lambda} \|_T^2
    \lesssim
    \sum_{T^\prime \subset \omega_T} \left( \eta_{R,T^\prime}^2 + h_{T^\prime}^2 \| \bff - \cP_h^k \bff \|_{T^\prime}^2 \right) \: ,
  \end{split}
  \label{eq:bound_residual}
\end{equation}
where $\omega_T = \cup \{ \omega_z : z \in T \}$ and where
\begin{equation}
  \begin{split}
    \eta_{R,T} & = \left( h_T^2 \| \cP_h^k \bff + \div \: \bsigma_h ( \bu_h,p_h ) \|_{T}^2
    + \sum_{S \subset \partial T} h_S \| \llbracket \bsigma_h ( \bu_h,p_h ) \cdot \bn \rrbracket_S^\ast \|_S^2 \right. \\
    & \left. \hspace{7cm} + \| \div \: \bu_h - \frac{p_h}{\lambda} \|_T^2 \right)^{1/2}
  \end{split}
  \label{eq:residual_estimator}
\end{equation}
denotes the residual error estimator. The local efficiency of this residual error estimator is shown, for the case of the
incompressible Stokes equations, in \cite[Sect. 4.10.3]{Ver:13}. In analogy to \cite[Theorem 4.70]{Ver:13} we obtain that
\begin{equation}
  \eta_{R,T} \lesssim \left( \| \bepsilon (\bu - \bu_h) \|_{\omega_T}^2 + \| p - p_h \|_{\omega_T}^2
  + h_T^2 \| \bff - \cP_h^k \bff \|_{\omega_T}^2 \right)^{1/2}
\end{equation}
holds. All together this leads to the local efficiency bound
\begin{equation}
  \eta_{A,T}^2 + \eta_{B,T}^2 + \eta_{C,T}^2 \lesssim
  \| \bepsilon (\bu - \bu_h) \|_{\tilde{\omega}_T}^2 + \| p - p_h \|_{\tilde{\omega}_T}^2
  + h_T^2 \| \bff - \cP_h^k \bff \|_{\tilde{\omega}_T}^2 \: ,
\end{equation}
where $\tilde{\omega}_T := \cup \{ \omega_{T^\prime} : T^\prime \subset \omega_T \}$, i.e., the next layer of elements around
$\omega_T$. 

\section{Numerical Results}

\label{sec-numerical}

Finally, we present numerical results obtained for a popular test example for linear elasticity computations. It is given by the Cook's
membrane problem which consists of a quadrilateral domain $\Omega \subset \R^2$ with corners $(0,0)$, $(0.48,0.44)$, $(0.48,0.6)$
and $(0,0.44)$. Homogeneous Dirichlet boundary conditions hold on the left boundary segment while traction forces are prescribed
on the remaining boundary parts, $\bg \equiv \bzero$ on the top and on the bottom, $\bg \equiv (0,0.01)$ on the right. We restrict
ourselves to the incompressible limit $\lambda = \infty$ since this is the most challenging situation.

\begin{figure}[!htb]
  \hspace{-0.75cm}\includegraphics[scale=0.75]{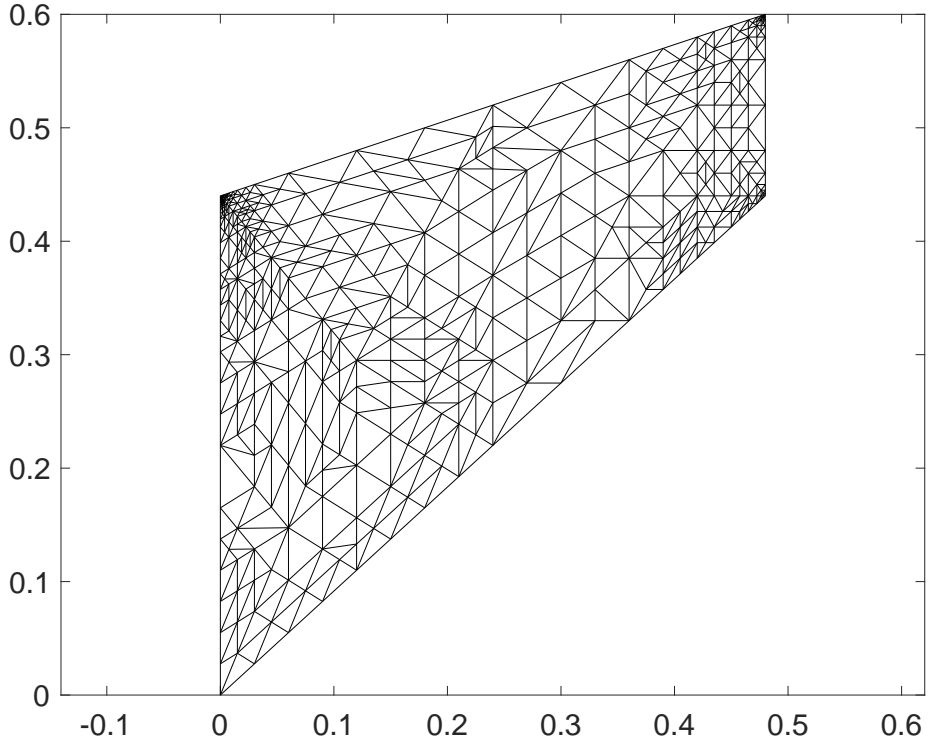}
  \caption{Triangulation after 9 adaptive refinement steps}
  \label{fig-cookmesh}
\end{figure}

Starting from an initial triangulation with 32 elements, 14 adaptive refinement steps are performed based on our
error estimator $\eta_T = \left( \eta_{A,T}^2 + \eta_{B,T}^2 + \eta_{C,T} \right)^{1/2}$ from (\ref{eq:estimator_general}).
The refinement strategy uses D\"orfler marking, i.e., a subset $\widetilde{\cT}_h \subset \cT_h$ of elements with the largest
estimator contributions is refined such that
\begin{equation}
  \left( \sum_{T \in \widetilde{\cT}_h} \eta_T^2 \right)^{1/2} \geq \theta \left( \sum_{T \in \cT_h} \eta_T^2 \right)^{1/2}
  \label{eq:bulk}
\end{equation}
holds. Figure \ref{fig-cookmesh} shows the refined triangulation after the 7th refinement step. As expected, most of the refinement
is concentrated around the most severe singularity at the left upper corner. However, some local refinement is also seen at the other
corners where the solution fails to be in $H^3 (\Omega)$. At later refinement steps this is no longer visible as nicely since individual
triangles can no longer be recognized in the vicinity of the corners. Figure \ref{fig-cookconv} shows the decrease of the error estimator
components $\eta_{A,T}$, $\eta_{B,T}$ and $\eta_{C,T}$ as well as the total estimator $\eta_T$ (on the vertical axis) in dependence
of the dimension of the finite element spaces (on the horizontal axis). All three estimator contributions apparently convergence with
the optimal rate $\eta \sim N^{-1}$, if $N$ denotes the associated number of degrees of freedom.

\begin{figure}[!htb]
  \hspace{-0.75cm}\includegraphics[scale=0.75]{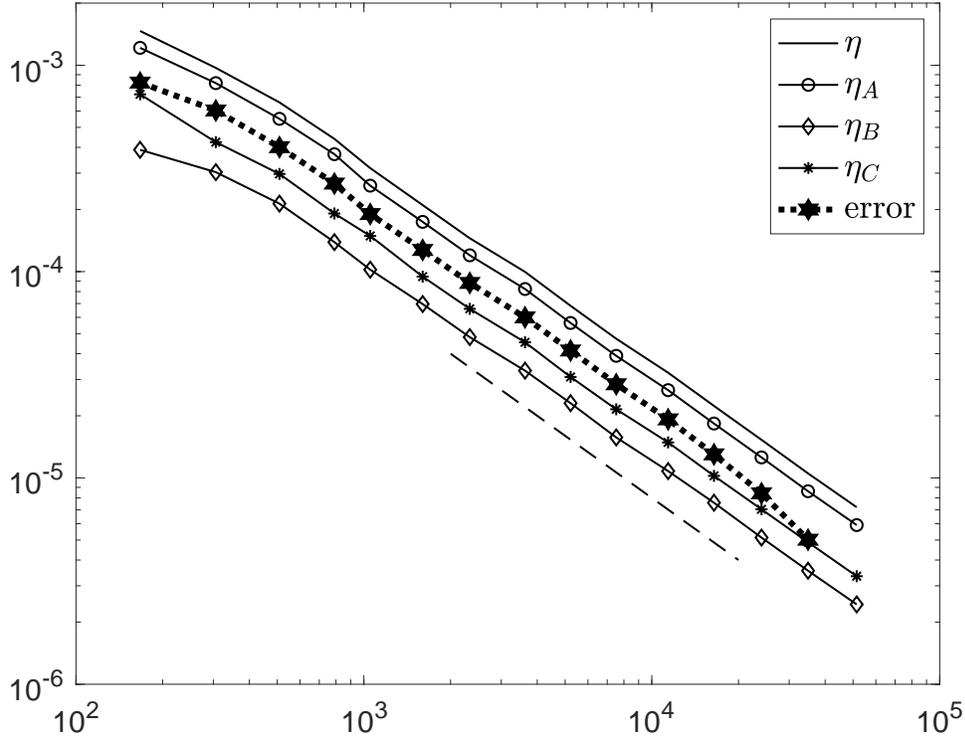}
  \caption{Error estimator convergence behavior}
  \label{fig-cookconv}
\end{figure}

In order to investigate the efficiency of our estimator, we also attempt a comparison with the actual true error $||| ( \bu - \bu_h , p - p_h ) |||$.
However, since the exact solution $( \bu , p )$ is not known to us analytically in this case, we use the approximation $||| ( \bu^\ast , p^\ast ) |||$
on the finest triangulation (after 14 refinements) instead and compute $||| ( \bu^\ast - \bu_h , p^\ast - p_h ) |||$. We may trust that
$||| ( \bu^\ast - \bu_h , p^\ast - p_h ) ||| \approx ||| ( \bu - \bu_h , p - p_h ) |||$ at least up to refinement level 12, before the curve starts to
astray downwards due to the discrepancy between $(\bu^\ast,p^\ast)$ and $(\bu,p)$. Figure \ref{fig-cookconv} also shows that the energy norm
of the error is already bounded from above by the dominating estimator contribution $\eta_A$ alone.

If one is interested in guaranteed upper bounds which are as tight as possible one may refine the derivation of the reliability of our error
estimator by incorporating the local constants $C_{K,z}$ and $C_{A,z}$ in (\ref{eq:Korn_average}) and (\ref{eq:dev_div_average}) into
the estimator contributions $\eta_{B,T}$ and $\eta_{C,T}$. To this end, the constants $C_{K,z}$ and $C_{A,z}$ may be bounded from
above as described at the end of Section \ref{sec-estimates_asym_volum}. However, since this becomes rather tedious we want to
finish our paper here with the conclusion that our numerical example already shows the potential of our equilibrated error estimator
for producing rather tight bounds for the error.

\newpage

\bibliography{../../biblio/articles,../../biblio/books}
\bibliographystyle{siamplain}

\end{document}